\documentclass[11pt]{amsart}
\usepackage{graphicx} 

\usepackage[utf8]{inputenc}
\usepackage[margin=1in,letterpaper,portrait]{geometry}
\usepackage{ytableau}
\ytableausetup{notabloids}
\ytableausetup{centertableaux}
\usepackage{latexsym,amsmath,amssymb,verbatim, tikz}
\usepackage{graphicx}
\usepackage{hyperref}

   \newtheorem{theorem}{Theorem}[section]
   \newtheorem{proposition}[theorem]{Proposition}
   
   \newtheorem{lemma}[theorem]{Lemma}
   \newtheorem{corollary}[theorem]{Corollary}
   \newtheorem{conjecture}[theorem]{Conjecture}
   \newtheorem{problem}[theorem]{Problem}
   \newtheorem{observation}[theorem]{Observation}

   \newtheorem{definition}[theorem]{Definition}
   \newtheorem{defn}[theorem]{Definition}

   \newtheorem{remark}[theorem]{Remark}

\numberwithin{equation}{section}

\newcommand{\todo}[1]{\vspace{5 mm}\par \noindent
\marginpar{\textsc{ToDo}} \framebox{\begin{minipage}[c]{0.95
\textwidth}
 #1 \end{minipage}}\vspace{5 mm}\par}

\newcommand{\RR}{{\mathbb {R}}}
\newcommand{\ZZ}{{\mathbb {Z}}}

\newcommand{\inv}{{\operatorname{inv}}}
\newcommand{\minv}{{\operatorname{minv}}}
\newcommand{\winv}{{\operatorname{winv}}}
\newcommand{\cwinv}{{\operatorname{cwinv}}}

\newcommand{\dist}{{\operatorname{dist}}}

\newcommand{\D}{{\operatorname{Diameter}}}
\newcommand{\Sort}{{\operatorname{Sort}}}

\newlength{\mysizetiny}
\setlength{\mysizetiny}{0.3em}
\newlength{\mysizesmall}
\setlength{\mysizesmall}{0.8em}
\newlength{\mysize}
\setlength{\mysize}{1.3em}
\newlength{\mysizelarge}
\setlength{\mysizelarge}{2em}

\title[Conjugating full cycles]
{Conjugating full cycles by adjacent transpositions: \\
diameter and sorting time}

\author{Ron M.\ Adin}
\address{Department of Mathematics, Bar-Ilan University, Ramat-Gan 52900, Israel}
\email{radin@math.biu.ac.il}

\author{Eli Bagno}
\address{Jerusalem college of technology, Jerusalem, Israel}
\email{bagnoe@jct.ac.il}

\author{Yuval Roichman}
\address{Department of Mathematics, Bar-Ilan University, Ramat-Gan 52900, Israel}
\email{yuvalr@math.biu.ac.il}

\date{January 18, 2026}

\subjclass{05A05, 68P10}
\keywords{Sorting, adjacent transpositions, cyclic permutation}

\begin{document}

\begin{abstract}
    We establish upper and lower bounds on the maximal number of steps needed to transform a cyclic permutation to the canonical cyclic permutation  
    using conjugation by adjacent transpositions, and on the diameter of the underlying Schreier graph.  
\end{abstract}

\maketitle

\tableofcontents 


\section{Introduction}\label{sec:introduction}


Let $C_n$ be the set of $n$-cycles in the symmetric group $S_n$. 
Let $\Phi_n := \{s_i = (i,i+1) :\, 1 \le i < n\}$ be the set of adjacent transpositions (Coxeter simple reflections) in $S_n$.

\begin{definition}\label{def:1}
    Consider the undirected graph $\Gamma_n$, with vertex set $C_n$, in which two cycles $c_1,c_2 \in C_n$ are adjacent if $c_2 = s_i c_1 s_i$ for some adjacent transposition $s_i \in \Phi_n$.     
\end{definition}


\begin{problem}\label{pb:1}
    Find the diameter of the graph $\Gamma_n$,
    \[
        D_n := \D(\Gamma_n)
        = \max_{\beta,\gamma\in C_n} \dist_{\Gamma_n}(\beta,\gamma).
    \]
Here $ \dist_{\Gamma_n}(\beta,\gamma)$ is the 
distance (length of shortest path) in $\Gamma_n$ between $\beta$ and $\gamma$. 
\end{problem}



A closely related problem is the following. 
A {\em cyclic permutation} $[\pi]$ of size $n$ is 
a bijective labeling of $n$ distinct points on a circle 
by the elements of $[n] := \{1, 2,\dots, n\}$; two labelings are equivalent if one can be obtained from the other by a cyclic shift. 
 The {\em canonical cyclic permutation} $c_n: = (1, 2, \dots , n)$ is the one in which the elements are clockwise labeled by $1,\dots,n$. 
A {\it permissible step} on $[\pi]$ is a switching of two consecutive values, $i$ and $i+1$, for some $1 \le i < n$. 


\begin{definition}\label{df:2}
     The {\em  Coxeter cyclic sorting time}, $\Sort_n$,  is the  maximal number of permissible steps needed to convert a cyclic permutation $[\pi]$ of size $n$ into the canonical cyclic permutation $c_n = (1, 2, \dots , n)$.  
\end{definition}

     By Remark~\ref{rem:iso} below, 
    \[
        \Sort_n
        := \max_{\gamma\in C_n} \dist_{\Gamma_n}(c_n, \gamma). 
    \]   

\begin{problem}\label{pb:2}



    Find the  Coxeter cyclic sorting time. 

\end{problem}

For other sorting processes on cyclic permutations see~\cite{AAR} and references therein. 


By definition, the  Coxeter cyclic sorting time is not bigger than the diameter.
Since the graph $\Gamma_n$ is not vertex transitive for $n\ge 4$ (see, e.g., Figure~\ref{fig:gamma4}), these two numbers are not necessarily equal. 
For example, for $n=5$, $\D(\Gamma_5)=5>4=\Sort_5$. 

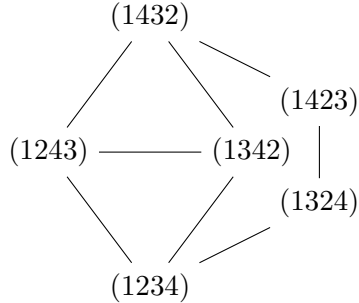
\begin{figure*}[ht]
\begin{center} 
\begin{tikzpicture}[scale=.45]
\node (2) at (0,4) {$(1432)$};
\node (1) at (-3,0) {$(1243)$};
\node (1') at (3,0) {$(1342)$};
\node (0) at (0,-4) {$(1234)$};
\node (A1) at (5,-1.5) {$(1324)$};
\node (A2) at (5,1.5) {$(1423)$};
\draw (2)--(1)--(0)--(1')--(2)--(A2)--(A1)--(0);
\draw  (1)--(1');
\end{tikzpicture}
\end{center}
    \caption{The graph $\Gamma_4$.}
    \label{fig:gamma4}
\end{figure*}


In this paper we prove
\begin{theorem}\label{thm:main1}
    For every positive integer $n$, 
    \[
         \frac{8-\pi}{16}\cdot n^2 +O(n)
        \le \Sort_n  
        \le \frac{1}{3}\cdot n^2 +O(n)
    \]
    and
    \[
         \frac{8-\pi}{16}\cdot n^2 +O(n)
        \le \D(\Gamma_n)
         \le \frac{3}{8}\cdot n^2 +O(n).
    \]     
\end{theorem}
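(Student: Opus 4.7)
My approach uses the Schreier graph description of $\Gamma_n$: identifying each $n$-cycle $c$ with the coset $\pi\langle c_n\rangle$ via $c = \pi c_n \pi^{-1}$, adjacency $c\mapsto s_i c s_i$ becomes left-multiplication of the coset by $s_i$, so
\[
    \dist_{\Gamma_n}(c, c_n) \;=\; \min_{r \in \ZZ/n\ZZ} \inv(c_n^r \pi^{-1}),
\]
and similarly $\dist_{\Gamma_n}(c, c') = \min_r \inv(\rho c^r)$ with $\rho = \pi' \pi^{-1}$. The two problems then reduce to estimating $\max_\sigma \min_r \inv(c_n^r \sigma)$ and $\max_{\rho, c} \min_r \inv(\rho c^r)$, respectively.

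For the $\Sort_n$ upper bound, I would bound the minimum over $r$ by the average. A direct computation gives
\[
    \frac{1}{n}\sum_{r=0}^{n-1} \inv(c_n^r \sigma) \;=\; \frac{1}{n}\sum_{i<j} \omega(\sigma(i), \sigma(j)),
\]
where $\omega(a, b) := (b-a) \bmod n \in \{1, \dots, n-1\}$. Since $\omega(\sigma(i),\cdot)$ runs over $\{1,\dots,n-1\}$ as its second argument varies over $[n] \setminus \{\sigma(i)\}$, the row-$i$ sum is bounded by the sum of the $n - i$ largest such values, namely $\binom{n}{2} - \binom{i}{2}$. Summing over $i$ and dividing by $n$ yields $\Sort_n \le n^2/3 + O(n)$, a bound saturated asymptotically by the reverse permutation.

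For the $\D(\Gamma_n)$ upper bound, an analogous averaging reduces to bounding a sum of ``descents at distance $d$'' counts $g(d, u) := \#\{k : u_k > u_{k+d}\}$ over a family of offsets $d$ determined by the cycle $c$, where $u$ encodes $\rho$ composed with the cyclic labeling. Using the symmetry $g(d, u) + g(n-d, u) = n$, a careful maximization that exploits both the freedom in $c$ (which controls the offsets $d$) and in $u$ gives an upper bound of $3n^3/8 + O(n^2)$ on the sum, hence $\D(\Gamma_n) \le 3n^2/8 + O(n)$. The slight increase compared to $\Sort_n$ reflects the extra degree of freedom relative to sorting to the canonical cycle.

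For the lower bound, I plan to construct an explicit $\sigma^* \in S_n$ with $\min_r \inv(c_n^r \sigma^*) \ge \tfrac{8-\pi}{16}\,n^2 + O(n)$; the lower bound on $\D(\Gamma_n)$ then follows from $\D(\Gamma_n) \ge \Sort_n$. The guiding principle is to engineer $\sigma^*$ so that the inversion profile $r \mapsto \inv(c_n^r \sigma^*)$ is as uniform in $r$ as possible — for the reverse permutation this profile dips to only $n^2/4$ at $r \approx n/2$. A natural candidate places values on the circle in an arithmetic progression with step coprime to $n$ and close to $n/2$. The asymptotic evaluation of $\min_r \inv(c_n^r \sigma^*)$ is then a Riemann sum for an integral, with the constant $(8-\pi)/16 = 1/2 - \pi/16$ arising from $\pi/16 = \int_0^{1/2}\sqrt{x(1-x)}\,dx$, the area of a quarter of the disk of radius $1/2$. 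The main obstacle will be this lower bound: the upper bounds follow from relatively clean averaging arguments, but pinpointing the optimal $\sigma^*$ and extracting the precise $(8-\pi)/16$ constant via a circular-area integral requires both combinatorial insight and careful asymptotic analysis.
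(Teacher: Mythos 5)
Your setup (Schreier graph, $\dist_{\Gamma_n}(c_n,\gamma)=\min_r\inv(\cdot)$) matches the paper's Remark~\ref{rem:iso} and Corollary~\ref{lem:1}, and your upper bound on $\Sort_n$ is correct: the identity $\sum_r\inv(c_n^r\sigma)=\sum_{i<j}\omega(\sigma(i),\sigma(j))$ with $\omega(a,b)=(b-a)\bmod n$, plus the row-wise bound $\sum_{j>i}\omega(\sigma(i),\sigma(j))\le\sum_{k=i}^{n-1}k$, gives exactly the paper's $\frac{(2n-1)(n-1)}{6}$ (Proposition~\ref{prop:u0}) via a different but equivalent bookkeeping --- the paper instead works with the cyclically invariant statistic $\cwinv(\pi)=n\inv(\pi)-2\winv(\pi)$ and its extremal values. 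However, there are two genuine gaps in the remaining parts.

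For the diameter upper bound, your sketch (``descents at distance $d$,'' symmetry $g(d,u)+g(n-d,u)=n$, then ``a careful maximization'') is not an argument; it is a promissory note. The per-$d$ contribution $\mu_d\,g(d,u)+(n-\mu_d)(n-g(d,u))$ can individually be as large as $n^2$ (take $\mu_d$ and $g(d,u)$ both near $0$ or both near $n$), so without identifying the global constraints linking the $\mu_d$'s and $g(d,u)$'s across different $d$, the bound $\sum_{i<j}g(d_{ij},u)\le\frac38 n^3+O(n^2)$ does not follow. The paper avoids this entirely: it averages over the double coset not the inversions themselves but the prefix sums $\sum_{i\le k}\tau(i)$ (Lemma~\ref{lem:3}), and separately bounds $\inv(\tau)$ in terms of a single prefix sum (Lemma~\ref{obs:4}: $\inv(\tau)\le\binom{n-k}{2}-k+\sum_{i\le k}\tau(i)$); optimizing over $k\approx n/2$ then gives $\binom{n}{2}-\frac{n^2}{8}+O(n)=\frac38 n^2+O(n)$. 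This is a cleaner route and you should adopt something like it.

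For the lower bound, the critical first step you are missing is a characterization of when $\inv(\pi)=\minv(\pi\ZZ_n)$: this holds iff $\pi$ is ``heavy-tailed,'' i.e.\ $\sum_{j\le k}\pi(j)\le\frac{k(n+1)}{2}$ for all $k$ (the paper's Lemma~\ref{lem:lower1}). The problem then becomes maximizing $\inv$ subject to these prefix-sum constraints. Your proposed extremizer --- an arithmetic progression with step $\approx n/2$ --- does not come close: for $n=8$ the permutation $[1,4,7,2,5,8,3,6]$ is heavy-tailed with only $9$ inversions, and more generally the AP with step $\approx n/2$ yields roughly $n^2/8$ inversions, well below the target $\left(\frac12-\frac{\pi}{16}\right)n^2\approx 0.304\,n^2$. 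The paper's extremizer $\pi_0$ (for $n=8$ it is $[4,3,2,8,1,7,6,5]$, with $13$ inversions) is a \emph{greedy} construction: start from $w_0 c_n^{\lfloor n/2\rfloor}$, which has two decreasing runs, then push $n,n-1,\dots$ leftward as far as the heavy-tail inequalities permit; the inequality $k_t^2\gtrsim t(n-t)$ forces $k_t\approx\sqrt{t(n-t)}$, and $\sum_t k_t\approx\int_0^{n/2}\sqrt{x(n-x)}\,dx=\frac{\pi}{16}n^2$, whence $\inv(\pi_0)=\binom{n}{2}-\sum_t k_t\approx\left(\frac12-\frac{\pi}{16}\right)n^2$. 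You correctly identified the quarter-disk integral, but attached it to the wrong construction.
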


See Theorems~\ref{thm:u0}, ~\ref{thm:2} and~\ref{thm:1} below.

\begin{remark}
Theorem~\ref{thm:main1} implies that the diameter and sorting time discussed in this paper, where permissible steps switch adjacent values, are different from those studied in~\cite{AAR}, where steps switch adjacent positions. 
\end{remark}

\begin{remark}
For even $n$, let $I_n$ be the graph 
on fixed-point-free involutions
with adjacency relation as above. 
The graph $I_n$ 
is 
the Hasse diagram of the involutive weak order on this conjugacy class
~\cite[Definition 1.3]{APR}. Note that the Bruhat order on fixed-point-free involutions, studied in~\cite{RS, DS, H2, CCT}, has the same rank generating function. 
The graph $I_n$ is not vertex transitive. However,  
it turns out that the diameter of $I_n$ is equal  to the maximal rank, i.e., to the maximal distance in $I_n$ of a fixed-point-free involution from the canonical one $\iota_n:=s_1 s_3\cdots s_{n-1}$~\cite{Avni-Bagno}.
\end{remark}

\section{Notation and preliminaries}

Let $S_n$ be the symmetric group on the letters $[n]:=\{1,\dots, n\}$. The one-line notation of a permutation $\pi\in S_n$ is denoted by square brackets $[\pi(1),\dots,\pi(n)]$, while the cycle notation by parentheses. For example $[3,5,6,1,2,4]=(1,3,6,4)(2,5)\in S_6$.

Letting $H$ be a subgroup of a finite group $G$, and $S\subseteq  G$, the {\em Schreier graph} $X(G/H,S)$ is the undirected graph whose vertex set consists of all left cosets $G/H:=\{\pi H\, :\, \pi\in G\}$; the edges are the unordered pairs $(\pi H, s\pi H)$, for $s\in S, \pi\in G$.

\section{The minv statistic}


Let $c_n:=(1,2,\dots,n)=s_1s_2\cdots s_{n-1}\in S_n$. 
Denote by $\ZZ_n$ the cyclic subgroup of $S_n$ generated by $c_n$. Recall the graph $\Gamma_n$ from Definition~\ref{def:1}.



\begin{remark}\label{rem:iso}
    Consider the map from $C_n$ to the set of left cosets $S_n/\ZZ_n$, defined as follows: a cycle $\gamma = (a_1,\dots,a_n)\in C_n$ is mapped to the coset $\bar\gamma \ZZ_n$, where $\bar\gamma=[a_1,\dots,a_n]$ . This map is a well-defined bijection and induces an isomorphism from the graph $\Gamma_n$ to the Schreier graph $X(S_n/\ZZ_n, \Phi_n)$, where edge multiplicity is ignored. 
\end{remark}


Recall the {\em inversion number} of a permutation $\pi\in S_n$,
\[
    \inv(\pi) := |\{i<j :\, \pi(i) > \pi(j)\}|.
\]

\begin{definition}\label{def:2}
    Define the {\em minv} statistic on the left cosets of $\ZZ_n$ in $S_n$ as follows:
    \[
        \minv(\pi \ZZ_n) := \min\{ \inv(\sigma) :\, \sigma \in \pi\ZZ_n)\}
        \qquad (\forall\, \pi\ZZ_n \in S_n/\ZZ_n).
    \]
\end{definition}



\begin{lemma}\label{lem:2} 
    For every pair of full cycles $\gamma_1, \gamma_2\in C_n$,
    \[
        \dist_{\Gamma_n}(\gamma_1,\gamma_2)=\min 
        \{\inv(\sigma):\ \sigma \in \bar\gamma_1 \ZZ_n \bar\gamma_2^{-1}\},  
    \]
    where $\bar\gamma_1, \bar\gamma_2$ are defined as in Remark~\ref{rem:iso}. 
\end{lemma}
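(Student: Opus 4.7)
The plan is to translate the problem through the isomorphism of Remark~\ref{rem:iso} into a shortest-word problem in the Coxeter group $S_n$, and then invoke the classical identification of Coxeter length with the inversion number.

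First I would use Remark~\ref{rem:iso} to replace $\dist_{\Gamma_n}(\gamma_1,\gamma_2)$ by the distance in the Schreier graph $X(S_n/\ZZ_n,\Phi_n)$ between $\bar\gamma_1\ZZ_n$ and $\bar\gamma_2\ZZ_n$. By the definition of the Schreier graph, a walk of length $d$ between these two cosets is exactly a sequence $s_{i_1},\dots,s_{i_d}\in\Phi_n$ with $s_{i_d}\cdots s_{i_1}\bar\gamma_1\in\bar\gamma_2\ZZ_n$, equivalently $s_{i_d}\cdots s_{i_1}\in \bar\gamma_2\ZZ_n\bar\gamma_1^{-1}$. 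Hence
\[
    \dist_{\Gamma_n}(\gamma_1,\gamma_2)
    =\min\bigl\{\ell(\sigma) :\, \sigma\in \bar\gamma_2\ZZ_n\bar\gamma_1^{-1}\bigr\},
\]
where $\ell(\sigma)$ denotes the minimum length of an expression of $\sigma$ as a product of generators in $\Phi_n$. The two inequalities are immediate: any walk gives such a $\sigma$, and any $\sigma$ in this coset-product, read in a minimal expression, gives such a walk.

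Next I would apply two classical facts about $S_n$ as a Coxeter group: the length $\ell(\sigma)$ equals $\inv(\sigma)$, and $\inv(\sigma)=\inv(\sigma^{-1})$. Since $\ZZ_n$ is closed under inversion, inverting element by element gives
\[
    \bigl(\bar\gamma_2\ZZ_n\bar\gamma_1^{-1}\bigr)^{-1}
    =\bar\gamma_1\,\ZZ_n\,\bar\gamma_2^{-1},
\]
so the two sets have the same multiset of inversion numbers, and in particular the same minimum. Substituting yields the stated formula.

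There is no real obstacle here; the whole argument is a translation through Remark~\ref{rem:iso} plus Coxeter-theoretic standards. The only mildly delicate point is the bookkeeping of left/right conventions: one must check that the Schreier-graph definition given in the preliminaries produces the coset product $\bar\gamma_2\ZZ_n\bar\gamma_1^{-1}$ on the correct side, which is then painlessly matched to the statement's $\bar\gamma_1\ZZ_n\bar\gamma_2^{-1}$ via the invariance of $\inv$ under inversion.
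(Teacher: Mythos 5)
Your proposal is correct and follows essentially the same route as the paper: translate the distance into the Schreier graph via Remark~\ref{rem:iso}, read off that it equals the minimum Coxeter length over the double-coset product, and identify length with $\inv$. You are in fact a bit more careful than the paper's own proof, which stops at $\bar\gamma_2\ZZ_n\bar\gamma_1^{-1}$ and leaves the passage to $\bar\gamma_1\ZZ_n\bar\gamma_2^{-1}$ (via $\inv(\sigma)=\inv(\sigma^{-1})$, or symmetry of $\dist$) implicit.
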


\begin{proof} 
    By Remark~\ref{rem:iso}, the distance $\dist_{\Gamma_n}(\gamma_1, \gamma_2)$  
    is equal to the smallest nonnegative integer $\ell$ for which there exist simple reflections $s_{i_1},\ldots,s_{i_\ell}$ satisfying  $s_{i_\ell} \cdots s_{i_1} \bar\gamma_1\ZZ_n = \bar\gamma_2\ZZ_n$.  
    Equivalently, 
    \[
        \ell 
        = \min \{\inv(\tau_2 \tau_1^{-1}) :\, \tau_1 \in \bar\gamma_1\ZZ_n,\, \tau_2 \in \bar\gamma_2\ZZ_n\}
        = \min \{\inv(\sigma) :\, \sigma \in \bar\gamma_2 \ZZ_n \bar\gamma_1^{-1}\}.
        \qedhere
    \]
\end{proof}

\begin{corollary}\label{lem:1} 
    For every full cycle $\gamma \in C_n$
    \[
        \dist_{\Gamma_n}(c_n, \gamma) = \minv(\bar\gamma\ZZ_n),   
    \]
    where $c_n = (1,2,\dots,n) \in C_n$ and $\bar\gamma\ZZ_n \in S_n/\ZZ_n$ is defined as in Remark~\ref{rem:iso}.\\
    In particular, for every $n\ge 1$ 
    \[
    \Sort_n= \max_{\pi\in S_n} \minv(\pi \ZZ_n) 
    \]
\end{corollary}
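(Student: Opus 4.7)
The plan is to derive the corollary as a direct specialization of Lemma~\ref{lem:2}, with a small symmetrization step using the fact that the inversion number is invariant under inversion of permutations.

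First I would set $\gamma_1 = c_n$ and $\gamma_2 = \gamma$ in Lemma~\ref{lem:2}. Since $c_n = (1,2,\dots,n)$ corresponds to $\bar{c}_n = [1,2,\dots,n] = e$ under the map of Remark~\ref{rem:iso}, the coset $\bar{c}_n \ZZ_n$ is just $\ZZ_n$ itself, and Lemma~\ref{lem:2} yields
\[
\dist_{\Gamma_n}(c_n,\gamma) = \min\{\inv(\sigma) :\ \sigma \in \ZZ_n\, \bar\gamma^{-1}\}.
\]
Next, I would rewrite the right-hand side as $\minv(\bar\gamma \ZZ_n)$. The key observation is that $\inv(\sigma)=\inv(\sigma^{-1})$ for every $\sigma\in S_n$, together with the set identity
\[
(\ZZ_n\,\bar\gamma^{-1})^{-1} = \bar\gamma\,\ZZ_n^{-1} = \bar\gamma\,\ZZ_n,
\]
the last equality holding because $\ZZ_n$ is a group. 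Substituting $\tau = \sigma^{-1}$ turns the minimum over $\ZZ_n \bar\gamma^{-1}$ into the minimum over $\bar\gamma\ZZ_n$, which is exactly $\minv(\bar\gamma\ZZ_n)$ by Definition~\ref{def:2}. This establishes the main identity $\dist_{\Gamma_n}(c_n,\gamma) = \minv(\bar\gamma\ZZ_n)$.

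For the "in particular" statement, I would invoke the definition of $\Sort_n$ as given in the excerpt, namely $\Sort_n = \max_{\gamma\in C_n} \dist_{\Gamma_n}(c_n,\gamma)$, and apply the main identity term by term to obtain $\Sort_n = \max_{\gamma\in C_n} \minv(\bar\gamma\ZZ_n)$. By Remark~\ref{rem:iso}, the assignment $\gamma \mapsto \bar\gamma\ZZ_n$ is a bijection from $C_n$ onto the coset space $S_n/\ZZ_n$, so the range of the maximum can be replaced by all cosets $\pi\ZZ_n$, hence by all $\pi\in S_n$, giving $\Sort_n = \max_{\pi\in S_n} \minv(\pi\ZZ_n)$.

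I do not foresee any real obstacle: the only non-bookkeeping step is the use of $\inv(\sigma)=\inv(\sigma^{-1})$ to flip the coset $\ZZ_n\bar\gamma^{-1}$ into $\bar\gamma\ZZ_n$. If desired, one could instead invoke Lemma~\ref{lem:2} with $\gamma_1 = \gamma$ and $\gamma_2 = c_n$, using symmetry of the distance function, which would avoid the inversion trick but still require identifying $\bar{c}_n = e$ and noting that $\bar\gamma\ZZ_n e^{-1} = \bar\gamma\ZZ_n$.
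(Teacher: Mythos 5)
Your proof is correct and matches the implicit specialization the paper intends; the paper states this as a corollary of Lemma~\ref{lem:2} without spelling out the steps, and you fill them in exactly as expected. The only point worth noting is that the statement of Lemma~\ref{lem:2} writes $\bar\gamma_1\ZZ_n\bar\gamma_2^{-1}$ while its proof actually derives $\bar\gamma_2\ZZ_n\bar\gamma_1^{-1}$; your use of $\inv(\sigma)=\inv(\sigma^{-1})$ (or, equivalently, your suggested alternative of swapping $\gamma_1$ and $\gamma_2$ and invoking symmetry of the distance) cleanly handles either form.
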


We conclude that Problem~\ref{pb:2} is equivalent to the following one. 

\begin{problem}

    What is the maximal value of {\em minv} on $S_n/\ZZ_n$ ?
    

\end{problem}

\section{Upper bound on the Coxeter cyclic sorting time}\label{sec:upper sorting}



Recall the  Coxeter cyclic sorting time, $\Sort_n$, from Definition ~\ref{df:2}. The main result in this section is the following. 



\begin{theorem}\label{thm:u0}
        For every $n\ge 1$,
    \[
        \Sort_n 
        \le \frac{2n^2 - 3n + 1}{6}.
    \]
\end{theorem}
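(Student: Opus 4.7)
The plan is to bound $\minv(\pi \ZZ_n) = \min_{0 \le k < n} \inv(\pi^{(k)})$ (via Corollary~\ref{lem:1}) by its average over the coset, and then to bound that average uniformly in~$\pi$. Writing $\pi^{(k)} := [\pi(k+1),\ldots,\pi(n),\pi(1),\ldots,\pi(k)]$ for the element of $\pi\ZZ_n$ obtained by cyclically shifting positions, and setting $T(\pi) := \sum_{k=0}^{n-1} \inv(\pi^{(k)})$, the trivial inequality $\min \le \text{average}$ reduces the theorem to showing
\[
T(\pi) \;\le\; \frac{n(n-1)(2n-1)}{6} \qquad (\forall\, \pi \in S_n).
\]

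The first task is to obtain a workable closed form for $T(\pi)$. I would swap the order of summation: fix a pair of values $v < w$, with positions $q_v := \pi^{-1}(v)$ and $q_w := \pi^{-1}(w)$, and count the cyclic shifts $\pi^{(k)}$ in which the pair $(v,w)$ is an inversion (i.e. the position of $v$ exceeds that of $w$). A brief case analysis on whether $k$ separates $q_v$ and $q_w$ around the cycle shows that this count is exactly $(q_w - q_v) \bmod n$, and therefore
\[
T(\pi) \;=\; \sum_{1 \le v < w \le n} \bigl[(q_w - q_v) \bmod n\bigr].
\]

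The heart of the argument—and the step I expect to be the only nontrivial one—is the following observation: for each fixed $v$, the residues $(q_w - q_v) \bmod n$ with $w \in \{v+1,\ldots,n\}$ are $n-v$ \emph{distinct} elements of $\{1,\ldots,n-1\}$, because $q := \pi^{-1}$ is a bijection. Hence their sum is at most the sum of the $n-v$ largest such residues, giving
\[
\sum_{w>v}\bigl[(q_w - q_v) \bmod n\bigr] \;\le\; \sum_{i=v}^{n-1} i \;=\; \frac{n(n-1) - v(v-1)}{2}.
\]
Summing over $v = 1,\ldots,n$ and using $\sum_{v=1}^n v(v-1) = n(n-1)(n+1)/3$ collapses the bound to $T(\pi) \le n(n-1)(2n-1)/6$, completing the proof.

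This distinctness observation is exactly what saves a factor of roughly $\tfrac32$: replacing each residue by the trivial upper bound $n-1$ would give only $T \le (n-1)\binom{n}{2} = n(n-1)^2/2$, which is too weak. The estimate is moreover sharp, attained by (and only by) the cyclic class of the reverse permutation $\pi = [n,n-1,\ldots,1]$: then $q_v = n+1-v$, so $(q_w - q_v)\bmod n = n - (w - v)$ for $v < w$, and $\{(q_w-q_v)\bmod n : w > v\} = \{v,v+1,\ldots,n-1\}$ for every $v$, turning every inequality above into an equality.
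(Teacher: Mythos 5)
Your argument is correct and arrives at the same bound, but by a genuinely different route once past the shared reduction $\minv(\pi\ZZ_n)\le \frac{1}{n}\sum_{k=0}^{n-1}\inv(\pi c_n^k)$. The paper evaluates this average exactly via two auxiliary statistics, the weighted inversion number $\winv$ and its cyclic variant $\cwinv(\pi):=n\inv(\pi)-2\winv(\pi)$: after showing $\cwinv$ is invariant under $\pi\mapsto\pi c_n$, it proves the identity $\sum_{k=0}^{n-1}\inv(\pi c_n^k)=\cwinv(\pi)+\binom{n+1}{3}$ and then bounds $\cwinv(\pi)\le\binom{n}{3}=\cwinv(w_0)$ by peeling off the letter $n$ inductively. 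You instead expand $T(\pi):=\sum_k\inv(\pi c_n^k)$ over pairs of values, showing each pair $v<w$ contributes $(q_w-q_v)\bmod n$ with $q=\pi^{-1}$, and then exploit the bijectivity of $q$: for fixed $v$ the $n-v$ residues $(q_w-q_v)\bmod n$ are distinct in $\{1,\dots,n-1\}$, hence dominated by the $n-v$ largest ones. Your version is more self-contained and elementary (no auxiliary statistics, no induction), while the paper's yields the shift-invariant statistic $\cwinv$ and the exact identity, which the authors flag as of independent interest; the two are of course related, since both express $T(\pi)$, so your pairwise sum must equal $\cwinv(\pi)+\binom{n+1}{3}$. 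One interpretive caveat on your closing remark: the sharpness of the bound on $T$ at $\pi=w_0$ does not make the theorem's bound on $\Sort_n$ tight, since the $n$ cyclic shifts of $w_0$ do not all have the same inversion number, so $\minv(w_0\ZZ_n)$ is strictly less than the coset average; indeed the paper conjectures $\Sort_n=\frac{8-\pi}{16}n^2+O(n)$, strictly below $\frac{1}{3}n^2$.
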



Theorem~\ref{thm:u0} is an immediate consequence of the following result. 




\begin{proposition}\label{prop:u0}
    For every $n\ge 1$, the maximum of the expected value of $\inv$ on left cosets $\pi \ZZ_n$ satisfies 
    \[
        \max_{\pi \in S_n} E\{ \inv(\sigma): \sigma \in  \pi \ZZ_n\}
        = \frac{(2n-1)(n-1)}{6}.
    \]
\end{proposition}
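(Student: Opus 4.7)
The plan is, for each $\pi \in S_n$, to compute the total $\sum_{\sigma \in \pi\ZZ_n} \inv(\sigma)$ and then divide by $n$. I expect the maximum total to equal $\binom{n}{2} + 2\binom{n}{3} = \frac{n(n-1)(2n-1)}{6}$, giving the claimed expected value $\frac{(n-1)(2n-1)}{6}$.

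First, since right-multiplying $\pi = [a_1,\ldots,a_n]$ by $c_n$ cyclically shifts the one-line notation, the coset $\pi\ZZ_n$ is exactly the set of $n$ cyclic shifts of $(a_1,\ldots,a_n)$. Arrange these $n$ values on a directed cycle read clockwise, and for each pair $\{x,y\}$ with $x<y$ at cyclic positions $p,q$, define $d_\pi(x,y) := (q-p) \bmod n \in \{1,\ldots,n-1\}$. A direct pair-by-pair count shows that $y$ precedes $x$ in exactly $d_\pi(x,y)$ of the $n$ cyclic shifts, and hence
\[
\sum_{\sigma \in \pi\ZZ_n} \inv(\sigma) \;=\; \sum_{1 \le x < y \le n} d_\pi(x,y).
\]

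Second, I would write $d_\pi(x,y) = 1 + |B_\pi(x,y)|$, where $B_\pi(x,y)$ is the set of labels strictly between $x$ and $y$ clockwise. The $1$'s contribute $\binom{n}{2}$ in total, and I regroup the $|B_\pi|$ terms by unordered triples:
\[
\sum_{x<y} |B_\pi(x,y)| \;=\; \sum_{u<v<w} C_\pi(u,v,w),
\]
where $C_\pi(u,v,w)$ counts how many of the three assertions $w \in B_\pi(u,v)$, $v \in B_\pi(u,w)$, $u \in B_\pi(v,w)$ hold. A short case analysis on the two possible clockwise cyclic orders of $\{u,v,w\}$ yields $C_\pi(u,v,w) \in \{1,2\}$: order $(u,v,w)$ contributes $1$ (only $v \in B_\pi(u,w)$), while order $(u,w,v)$ contributes $2$ (both $w \in B_\pi(u,v)$ and $u \in B_\pi(v,w)$).

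Combining, $\sum_{\sigma \in \pi\ZZ_n} \inv(\sigma) \le \binom{n}{2} + 2\binom{n}{3} = \frac{n(n-1)(2n-1)}{6}$, which after dividing by $n$ gives the desired upper bound. For the matching lower bound, take the reverse permutation $\pi = [n, n-1, \ldots, 1]$: its clockwise cyclic order is $(n, n-1,\ldots,1)$, so every triple $u<v<w$ has positions in clockwise cyclic order $(w,v,u)$, which is the same cyclic class as $(u,w,v)$, attaining $C_\pi = 2$. Hence equality holds throughout, proving the proposition. The main (though light) obstacle is the triple case analysis bounding $C_\pi$ by $2$; the reduction to cyclic distances and the triple regrouping are routine.
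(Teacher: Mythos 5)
Your proof is correct, but it takes a genuinely different route from the paper's. The paper's proof is algebraic: it introduces the weighted inversion number $\winv(\pi) = \sum_{i<j,\,\pi(i)>\pi(j)} (\pi(i)-\pi(j))$, proves the identity $\winv(\pi) = \sum_i i^2 - \sum_i i\pi(i)$ by induction on $\inv$, defines the cyclic variant $\cwinv(\pi) = n\inv(\pi) - 2\winv(\pi)$, shows $\cwinv$ is invariant under cyclic shifts, establishes the bounds $0 \le \cwinv(\pi) \le \binom{n}{3}$ by an inductive peeling argument, and finally derives the identity $n\cdot E\{\inv(\sigma):\sigma\in\pi\ZZ_n\} = \cwinv(\pi) + \binom{n+1}{3}$ by telescoping $\inv(\pi c_n^i) - \inv(\pi c_n^{i-1})$. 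Your proof is instead a direct bijective count: you express $\sum_{\sigma\in\pi\ZZ_n}\inv(\sigma) = \sum_{x<y} d_\pi(x,y)$ via the observation that the pair $\{x,y\}$ is inverted in exactly $d_\pi(x,y)$ of the $n$ cyclic shifts, then split $d_\pi(x,y) = 1 + |B_\pi(x,y)|$ and regroup the ``betweenness'' terms into unordered triples, where the cyclic-order case analysis gives $C_\pi \in \{1,2\}$. The two give the same answer, and in fact your closed form $\binom{n}{2} + \sum_{u<v<w} C_\pi(u,v,w)$ matches the paper's $\cwinv(\pi) + \binom{n+1}{3}$ term by term, so the triple-sum $\sum C_\pi$ is an explicit combinatorial model for $\cwinv(\pi) + \binom{n}{3}$. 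What each approach buys: the paper's route introduces $\winv$ and $\cwinv$, which the authors reuse elsewhere (a palindromicity observation, a geometric interpretation of $\winv$ via angles), whereas your route is self-contained and gives a transparent reason for the upper bound ($C_\pi \le 2$ pointwise on triples) without any induction. One small polish worth making: spell out the triple case analysis (cyclic order $(u,v,w)$ yields exactly the middle betweenness $v \in B_\pi(u,w)$, so $C_\pi = 1$; cyclic order $(u,w,v)$ yields exactly $w \in B_\pi(u,v)$ and $u \in B_\pi(v,w)$, so $C_\pi = 2$), and the one-line verification that for each fixed pair $\{x,y\}$ the quantity $d_\pi(x,y)$ counts the shifts placing $y$ before $x$: among the $n$ cyclic shifts, $y$ lands before $x$ exactly when $x$'s position lies in the last $d_\pi(x,y)$ slots.
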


\begin{proof}[Proof of Theorem~\ref{thm:u0}]
    By 
    Corollary~\ref{lem:1}, Definition~\ref{def:2} 
    and Proposition~\ref{prop:u0},
\begin{align*}
    \Sort_n 
    &= \max_{\pi\in S_n} \minv(\pi \ZZ_n)
    = \max_{\pi\in S_n} \min \{\inv(\pi c_n^i): 0\le i<n\} \\
    &\le \max_{\pi\in S_n} E\{\inv(\pi c_n^i): 0\le i<n\}
    = \frac{(2n-1)(n-1)}{6}. 
    \qedhere
\end{align*}

\end{proof}

In order to prove Proposition~\ref{prop:u0} 
we consider a weighted version of the inversion number, as well as a cyclic variant, see Definitions~\ref{def:winv} and~\ref{def:cyclic winv} below. 

\begin{defn}\label{def:winv}
    The {\em weighted inversion number} of a permutation $\pi\in S_n$ is 
    \[
        \winv(\pi)
        := \sum_{\substack{i,j \\ i<j,\, \pi(i)>\pi(j)}} (\pi(i)-\pi(j)).
    \]
\end{defn}

\begin{lemma}\label{lem:u6}
    For every $\pi \in S_n$,
    \[
        \winv(\pi) 
        = \sum_{i=1}^n i^2 -
        \sum_{i=1}^n i\cdot \pi(i) .
    \]
\end{lemma}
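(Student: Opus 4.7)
The plan is to rewrite $\winv(\pi)$ by collecting contributions value by value rather than pair by pair. Fix $v\in[n]$ and let $p:=\pi^{-1}(v)$ be its position in $\pi$. In the double sum defining $\winv(\pi)$, the value $v$ contributes $+v$ for each inversion in which it is the larger element (i.e., for each $j>p$ with $\pi(j)<v$) and $-v$ for each inversion in which it is the smaller element (i.e., for each $i<p$ with $\pi(i)>v$).

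Setting $a:=|\{j>p:\pi(j)<v\}|$, a quick count of the four position/value classes gives $|\{i<p:\pi(i)>v\}|=(p-1)-(v-1-a)=p-v+a$: among the $v-1$ values smaller than $v$, exactly $a$ lie to the right of $p$, so $v-1-a$ lie to its left; hence the remaining $(p-1)-(v-1-a)$ left-of-$p$ positions carry values greater than $v$. The net contribution of $v$ to $\winv(\pi)$ is therefore
\[
v\cdot a - v\cdot(p-v+a)=v(v-p),
\]
independent of the auxiliary parameter $a$. Summing over $v$,
\[
\winv(\pi)=\sum_{v=1}^n v(v-\pi^{-1}(v))=\sum_{v=1}^n v^2-\sum_{v=1}^n v\,\pi^{-1}(v),
\]
and reindexing the last sum by $i=\pi^{-1}(v)$ (equivalently $v=\pi(i)$) converts it into $\sum_{i=1}^n i\,\pi(i)$, which is the claimed identity.

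The only substantive point is the cancellation that makes the contribution of $v$ depend only on $v$ and $p$, not on the finer structure of $\pi$; after that, the proof is pure reindexing. An equally short alternative would proceed by induction on $\inv(\pi)$: applying $s_k$ on the right to $\pi$ in a case where $\pi(k)<\pi(k+1)$ creates exactly one new inversion, changing $\winv(\pi)$ by $+(\pi(k+1)-\pi(k))$ and changing $\sum_i i\,\pi(i)$ by $-(\pi(k+1)-\pi(k))$, so both sides of the claimed equation change in step, with the base case $\pi=\operatorname{id}$ being trivial.
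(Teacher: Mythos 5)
Your primary argument is correct and takes a genuinely different route from the paper. The paper proves the identity by induction on $\inv(\pi)$: each right-multiplication by an adjacent transposition $s_j$ with $\pi(j)>\pi(j+1)$ decreases $\inv$ by one, and one checks that both sides of the identity change by $\pi(j)-\pi(j+1)$. You instead give a direct, value-by-value accounting: for each value $v$ at position $p=\pi^{-1}(v)$, the auxiliary count $a=|\{j>p:\pi(j)<v\}|$ cancels and $v$ contributes exactly $v(v-p)$ to $\winv(\pi)$, after which a reindexing yields the identity. I verified the key counting step: among the $p-1$ positions to the left of $p$, exactly $v-1-a$ hold values less than $v$, so $p-v+a$ hold larger values, and $va - v(p-v+a)=v(v-p)$. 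The reindexing $\sum_v v\,\pi^{-1}(v)=\sum_i i\,\pi(i)$ is also correct. Your direct approach is arguably more illuminating, giving a closed expression for each value's net contribution, whereas the paper's induction is shorter to write but less structural. The ``equally short alternative'' you sketch at the end is, in substance, exactly the paper's proof.
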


\begin{proof}
    By induction on the inversion number $\inv(\pi)$. 
    If $\inv(\pi) = 0$ then $\pi = id$, and the claim clearly holds. 
    Now assume that the claim holds for all $\pi' \in S_n$ with $\inv(\pi') \le k$, and let $\pi \in S_n$ have $\inv(\pi) = k+1$. 
    Since $\pi \ne id$, there exists $1 \le j<n$ such that $\pi(j) > \pi(j+1)$, so that $\pi' := \pi s_j$ satisfies
    $\inv(\pi') = k$. 
    By the induction hypothesis,
    \[
        \winv(\pi') 
        = \sum_{i=1}^n i^2 -
        \sum_{i=1}^n i\cdot \pi'(i) .
    \]    
    By Definition~\ref{def:winv}, $\winv(\pi) - \winv(\pi') = \pi(j) - \pi(j+1)$. Also,
    \[
        \left( \sum_{i=1}^n i^2 -
        \sum_{i=1}^n i\cdot \pi(i) \right)
        - \left( \sum_{i=1}^n i^2 -
        \sum_{i=1}^n i\cdot \pi'(i) \right)
        = \sum_{i=1}^n i \cdot \left( \pi'(i) - \pi(i) \right)
        = \pi(j) - \pi(j+1),
    \]
    and thus the claim holds for $\pi$ as well.
    This completes the induction step and the proof.
\end{proof}

Denote $w_0:=[n,n-1,\dots,1]\in S_n$. Recall that $w_0$ has the largest inversion number in $S_n$. 

\begin{observation}\label{obs:u2}
    For any $\pi \in S_n$,
        \[
            \winv(\pi)+\winv(\pi w_0)
            = \sum_{1 \le k < \ell \le n} (\ell - k)
            = \sum_{\ell = 2}^{n} \binom{\ell}{2} = \binom{n+1}{3}
        \]
        and
        \[
            0 = \winv(id) 
            \le \winv(\pi)
            \le \winv(w_0)
            = \binom{n+1}{3}.
        \]
\end{observation}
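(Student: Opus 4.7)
The plan is to derive the identity for $\winv(\pi) + \winv(\pi w_0)$ by a ``complementary pair'' argument, and then deduce the inequality as a corollary. The nonnegativity $\winv(\pi) \ge 0$ and the base case $\winv(id) = 0$ are immediate from Definition~\ref{def:winv}, and the upper bound will follow automatically from the identity once we verify that $\winv(w_0) = \binom{n+1}{3}$.

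First I would show the key identity by a bijective pairing. For each pair $1 \le k < \ell \le n$, note that $\{k,\ell\}$ contributes the weight $\ell - k$ to $\winv(\pi)$ exactly when $\pi^{-1}(\ell) < \pi^{-1}(k)$ (i.e., the larger value stands to the left), and contributes $0$ otherwise. Since $(\pi w_0)(i) = \pi(n+1-i)$, the one-line notation of $\pi w_0$ is the reversal of that of $\pi$, so $(\pi w_0)^{-1}(v) = n+1 - \pi^{-1}(v)$. Hence the inequality $(\pi w_0)^{-1}(\ell) < (\pi w_0)^{-1}(k)$ is equivalent to $\pi^{-1}(\ell) > \pi^{-1}(k)$, i.e., the pair $\{k,\ell\}$ contributes to exactly one of $\winv(\pi)$ and $\winv(\pi w_0)$. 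Summing over all pairs yields
\[
    \winv(\pi) + \winv(\pi w_0) = \sum_{1 \le k < \ell \le n}(\ell - k).
\]
An alternative, computational route is to apply Lemma~\ref{lem:u6} to each summand and use the substitution $i \mapsto n+1-i$ in $\sum_i i\,\pi(n+1-i)$; this reduces the sum to $2\sum_{i=1}^{n} i^2 - (n+1)\sum_{i=1}^n i$, which simplifies to $\binom{n+1}{3}$.

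Next I would evaluate the right-hand side in the two stated forms. Writing the inner sum over $k$ for fixed $\ell$ gives $\sum_{k=1}^{\ell-1}(\ell-k) = \binom{\ell}{2}$, so $\sum_{k<\ell}(\ell-k) = \sum_{\ell=2}^n \binom{\ell}{2}$, which equals $\binom{n+1}{3}$ by the hockey-stick identity.

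Finally, for $\pi = w_0$ one has $\pi^{-1}(v) = n+1-v$, so for every pair $k<\ell$ the condition $\pi^{-1}(\ell) < \pi^{-1}(k)$ is satisfied; thus every pair contributes $\ell - k$ and $\winv(w_0) = \binom{n+1}{3}$. Combined with $\winv(\pi) \ge 0$ and the identity just established, this yields $0 \le \winv(\pi) \le \binom{n+1}{3}$ for all $\pi \in S_n$. I expect no real obstacle here; the only subtle point is the indexing check when passing from $\pi$ to $\pi w_0$, which is handled once by noting that right multiplication by $w_0$ reverses the one-line notation.
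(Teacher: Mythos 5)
Your proof is correct. The paper states Observation~\ref{obs:u2} without a proof, treating it as immediate; your complementary-pair argument (each pair $\{k,\ell\}$ contributes its weight $\ell-k$ to exactly one of $\winv(\pi)$ and $\winv(\pi w_0)$, since right-multiplication by $w_0$ reverses the one-line notation) is a clean way to justify it, and you also correctly sketch the alternative computational derivation from Lemma~\ref{lem:u6}, which, given the placement of the observation immediately after that lemma, is likely the route the authors had in mind.
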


The following cyclic variant of $\winv$ plays a key role in the proof.

\begin{defn}\label{def:cyclic winv}
    The {\em cyclic weighted inversion number} of a permutation $\pi\in S_n$ is
    \[
        \cwinv(\pi)
        := n\cdot \inv(\pi)-2\cdot \winv(\pi)
        = \sum_{\substack{i,j \\ i<j,\, \pi(i)>\pi(j)}} (n-2\pi(i) + 2\pi(j)).
    \]
\end{defn}

The following observation justifies the adjective {\em cyclic} in Definition~\ref{def:cyclic winv}. 

\begin{observation}\label{obs:u3}
The cyclic weighted inversion number is invariant under cyclic shifts; namely, 
\[
    \cwinv(\pi c_n) = \cwinv(\pi) 
    \qquad (\forall \pi\in S_n).
\]
\end{observation}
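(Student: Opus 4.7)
The plan is to verify the identity by direct calculation, separately tracking how $\inv$ and $\winv$ change under the right-multiplication $\pi \mapsto \pi c_n$, and then combining them so that the two changes cancel.

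First I would observe that since $c_n(i) = i+1$ for $i<n$ and $c_n(n)=1$, right-multiplication by $c_n$ is a cyclic left-shift of the one-line notation: $\pi c_n = [\pi(2), \pi(3), \ldots, \pi(n), \pi(1)]$. This reframes the problem as comparing $\inv$ and $\winv$ of a permutation and of its left rotation.

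Next I would compute $\inv(\pi c_n) - \inv(\pi)$. For all pairs $(i,j)$ with $2 \le i+1 < j+1 \le n$, the corresponding comparison in $\pi c_n$ matches the comparison of positions $(i+1, j+1)$ in $\pi$, so these contributions cancel. The only net change comes from the element $\pi(1)$, which is compared with $\pi(2),\ldots,\pi(n)$ as the leftmost entry in $\pi$ but as the rightmost entry in $\pi c_n$. Writing $a = \pi(1)$, there are $n-a$ values larger than $a$ and $a-1$ values smaller than $a$ among $\pi(2),\ldots,\pi(n)$, giving
\[
    \inv(\pi c_n) - \inv(\pi) = (n-a) - (a-1) = n - 2\pi(1) + 1.
\]

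Then I would compute $\winv(\pi c_n) - \winv(\pi)$ via Lemma~\ref{lem:u6}, which reduces this difference to $\sum_i i\cdot \pi(i) - \sum_i i\cdot (\pi c_n)(i)$. Re-indexing $\sum_{i=1}^{n-1} i\,\pi(i+1) = \sum_{j=2}^n (j-1)\pi(j) = \sum_j j\,\pi(j) - \sum_j \pi(j)$ and using $\sum_j \pi(j) = n(n+1)/2$, a short calculation gives
\[
    \winv(\pi c_n) - \winv(\pi) = \frac{n(n+1)}{2} - n\,\pi(1).
\]
Finally, combining these with $\cwinv = n\cdot\inv - 2\cdot\winv$,
\[
    \cwinv(\pi c_n) - \cwinv(\pi)
    = n\bigl(n - 2\pi(1)+1\bigr) - 2\left(\frac{n(n+1)}{2} - n\,\pi(1)\right) = 0,
\]
which is the desired invariance. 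There is no real obstacle here; the only thing to be careful about is bookkeeping in the re-indexing for the $\winv$ difference and in confirming that the contribution of $\pi(1)$ to $\inv$-changes matches the $\pi(1)$ coefficient coming from the $\winv$-changes so that they cancel.
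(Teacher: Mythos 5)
Your proof is correct. The overall structure matches the paper's: compute the change in $\inv$ and in $\winv$ under right-multiplication by $c_n$ separately, then show the two changes cancel in $\cwinv = n\cdot\inv - 2\cdot\winv$. The $\inv$ step is identical (both track how the single element $\pi(1)$, moved from the left end to the right end, changes its set of inverted pairs, giving $n+1-2\pi(1)$).

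The one genuine difference is in the $\winv$ step. The paper mirrors the $\inv$ computation and reads the change directly off Definition~\ref{def:winv}: the inversions lost are $\sum_{1\le j<m}(m-j)$ and the inversions gained are $\sum_{m<j\le n}(j-m)$, where $m=\pi(1)$, yielding $\binom{n-m+1}{2}-\binom{m}{2}$. You instead route through the closed formula of Lemma~\ref{lem:u6}, $\winv(\pi)=\sum_i i^2-\sum_i i\,\pi(i)$, and re-index the linear form $\sum_i i\,\pi(i)$ under the cyclic shift. Both computations produce the same quantity, $\tfrac{n(n+1)}{2}-n\pi(1)$. Your route has the small advantage of being purely algebraic (no combinatorial bookkeeping over inverted pairs, which is where one is most likely to miscount), at the cost of invoking the auxiliary Lemma~\ref{lem:u6}; the paper's route is self-contained within the definition and makes the parallelism with the $\inv$ computation visually transparent. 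Either is a clean, complete proof.
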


\begin{proof}
    Denote $m:=\pi(1)$. Then $m = \pi c_n(n)$, so that
    \[
        \inv(\pi c_n)-\inv(\pi)
        = (n-m)-(m-1) = n+1-2m
    \]
    and
    \[
        \winv(\pi c_n)-\winv(\pi)
        = \sum_{m < j \le n} (j-m) -\sum_{1 \le j < m} (m-j)
        = \binom{n-m+1}{2} - \binom{m}{2}.
    \]
    Thus
    \[
        \cwinv(\pi c_n)-\cwinv(\pi)
        = n(n+1-2m)-(n-m+1)(n-m)+m(m-1)
        = 0. \qedhere
    \]
\end{proof}

\begin{observation}\label{obs:u4}
    For every $\pi\in S_n$
    \[
    \cwinv(\pi)+\cwinv(\pi w_0)= n\binom{n}{2}-2\binom{n+1}{3}=\binom{n}{3}.
    \]
\end{observation}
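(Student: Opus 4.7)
The plan is to reduce this identity to the two defining ingredients of $\cwinv$, namely $\inv$ and $\winv$, since we already have the corresponding ``complementation'' identity for $\winv$ in Observation~\ref{obs:u2}.

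By Definition~\ref{def:cyclic winv},
\[
    \cwinv(\pi) + \cwinv(\pi w_0)
    = n\bigl(\inv(\pi) + \inv(\pi w_0)\bigr) - 2\bigl(\winv(\pi) + \winv(\pi w_0)\bigr).
\]
The second summand is handled directly by Observation~\ref{obs:u2}, giving $\winv(\pi) + \winv(\pi w_0) = \binom{n+1}{3}$. For the first summand I would prove the well-known fact
\[
    \inv(\pi) + \inv(\pi w_0) = \binom{n}{2},
\]
which follows from the observation that right-multiplication by $w_0$ sends $\pi$ to the permutation $[\pi(n), \pi(n-1), \dots, \pi(1)]$; a pair of positions $i<j$ is an inversion of $\pi w_0$ if and only if the reversed pair of positions $n+1-j<n+1-i$ is a non-inversion of $\pi$, so inversions and non-inversions of $\pi$ are complementary to those of $\pi w_0$.

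Substituting both identities gives
\[
    \cwinv(\pi) + \cwinv(\pi w_0)
    = n\binom{n}{2} - 2\binom{n+1}{3},
\]
which is the middle expression in the statement. It then remains to verify the closed form $n\binom{n}{2} - 2\binom{n+1}{3} = \binom{n}{3}$; factoring out $\tfrac{n(n-1)}{6}$ from both terms yields $\tfrac{n(n-1)}{6}\bigl(3n - 2(n+1)\bigr) = \tfrac{n(n-1)(n-2)}{6} = \binom{n}{3}$, as required.

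There is no real obstacle here: the nontrivial input (the $\winv$ complementation identity) has already been established in Observation~\ref{obs:u2}, and the $\inv$ complementation is a standard property of the longest element $w_0$. The only thing to watch is the consistent convention for right-multiplication by $w_0$ used in Observation~\ref{obs:u2}, so that the two complementation identities really do apply to the same permutation $\pi w_0$ appearing in the statement.
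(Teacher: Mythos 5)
Your proposal is correct and is precisely the argument the paper intends (the paper leaves Observation~\ref{obs:u4} unproved as an immediate consequence of Definition~\ref{def:cyclic winv}, Observation~\ref{obs:u2}, and the standard identity $\inv(\pi)+\inv(\pi w_0)=\binom{n}{2}$). The decomposition, the inversion complementation argument, and the final binomial simplification are all exactly what one would write in.
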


\begin{corollary}\label{obs:u5}
    For every $\pi\in S_n$
    \[
    0=\cwinv(id)\le \cwinv(\pi)\le \cwinv(w_0)=
    \binom{n}{3}.
    \]
\end{corollary}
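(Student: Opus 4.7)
The plan is to dispatch the two equalities and the upper bound quickly, leaving the real work for the lower bound $\cwinv(\pi) \ge 0$.

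First, $\cwinv(id) = 0$ is immediate from $\inv(id) = \winv(id) = 0$. Plugging $\pi = id$ into Observation~\ref{obs:u4} then gives $\cwinv(w_0) = \binom{n}{3}$. Once the lower bound $\cwinv \ge 0$ is established, Observation~\ref{obs:u4} applied to a general $\pi \in S_n$ yields
\[
\cwinv(\pi) = \binom{n}{3} - \cwinv(\pi w_0) \le \binom{n}{3},
\]
which is the desired upper bound.

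For the non-negativity I will induct on $n$; the base $n = 1$ is trivial. The key observation is that, by Observation~\ref{obs:u3}, $\cwinv$ is constant on every left coset $\pi\ZZ_n$, so I may replace $\pi$ by the unique representative $\pi'$ of its coset satisfying $\pi'(n) = n$. Such a representative exists because right multiplication by $c_n$ cyclically rotates the one-line notation, so $n$ occupies each position exactly once across $\pi\ZZ_n$. Writing $\tau \in S_{n-1}$ for the restriction of $\pi'$ to $[n-1]$, the value $n$ sitting in the last position of $\pi'$ creates no inversion, hence $\inv(\pi') = \inv(\tau)$ and $\winv(\pi') = \winv(\tau)$.

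The induction is powered by the elementary rewrite
\[
\cwinv(\pi') = n \cdot \inv(\tau) - 2 \winv(\tau) = \bigl((n-1) \inv(\tau) - 2 \winv(\tau)\bigr) + \inv(\tau),
\]
in which the bracketed term is precisely the $\cwinv$ statistic of $\tau$ viewed as an element of $S_{n-1}$. Thus $\cwinv$ at level $n$ equals $\cwinv$ at level $n-1$ plus the manifestly non-negative quantity $\inv(\tau)$, and the inductive hypothesis applied to $\tau$ closes the argument. I do not foresee a real obstacle; the only subtlety is that the reduction relies on the explicit factor of $n$ in the definition of $\cwinv$ — the gap between $n$ and $n-1$ is exactly what produces the slack $\inv(\tau) \ge 0$ that makes the induction work.
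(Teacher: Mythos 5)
Your proof is correct and rests on exactly the same two ingredients as the paper's: the reduction identity $\cwinv(\pi') = \cwinv(\tau) + \inv(\tau)$ for the coset representative with $\pi'(n)=n$, and Observation~\ref{obs:u4} to transfer one bound to the other. The only difference is the direction: you prove the lower bound $\cwinv \ge 0$ directly by induction (using the trivial fact $\inv(\tau)\ge 0$) and then deduce the upper bound from Observation~\ref{obs:u4}, whereas the paper iterates the same identity to get the upper bound $\cwinv(\pi)\le \sum_{j=1}^{n-1}\binom{j}{2}=\binom{n}{3}$ (using $\inv(\tau)\le\binom{n-1}{2}$) and then deduces non-negativity from Observation~\ref{obs:u4}. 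Your direction is arguably the marginally cleaner of the two, since $\inv(\tau)\ge0$ needs no computation, but the arguments are dual and essentially identical.
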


\begin{proof}
    By Observation~\ref{obs:u3}, we may assume that $\pi(n)=n$.
    Let $\bar\pi=[\pi(1),\dots,\pi(n-1)]\in S_{n-1}$. By definition,
    \[
    \cwinv(\pi)=n\cdot \inv(\pi)-2\cdot \winv(\pi)=
    n\cdot \inv(\bar \pi)-2\cdot \winv(\bar \pi)=
    \cwinv(\bar \pi)+ \inv(\bar \pi).
    \]
Hence, 
\[
\max_{\pi\in S_n} \cwinv(\pi)\le \max_{\bar \pi\in S_{n-1}} \cwinv(\bar \pi)+\max_{\pi \in S_{n-1}}\inv(\pi). 
\]
By iteration, 
\[
    \max_{\pi\in S_n} \cwinv(\pi)
    \le \max_{\bar \pi\in S_{n-1}} \cwinv(\bar \pi)+\max_{\pi \in S_{n-1}}\inv(\pi)
    \le \sum_{j=1}^{n-1} \max_{\pi\in S_j} \inv(\pi)
    = \sum_{j=1}^{n-1}\binom{j}{2}
    = \binom{n}{3} 
    = \cwinv(w_0).
\]
Combining this upper bound with Observation~\ref{obs:u4}, 
\[
    \min_{\pi\in S_n} \cwinv(\pi)
    = \binom{n}{3}-\max_{\pi\in S_n}\cwinv(\pi)
    \ge 0 
    = \cwinv(id). \qedhere
\]
\end{proof}

\begin{lemma}\label{lem:u7}
    For every $\pi \in S_n$
    \[
        n\cdot  
        E\{ \inv(\sigma): \sigma \in  \pi \ZZ_n\}
        = \sum_{i=1}^n \inv(\pi c_n^i)
        = \cwinv(\pi)+ \binom{n+1}{3}.
    \]
\end{lemma}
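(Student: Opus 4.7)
The first equality is immediate from the definition: the coset $\pi\ZZ_n$ consists of the $n$ distinct elements $\pi c_n^1,\dots,\pi c_n^n$, so the expected value of $\inv$ on this coset is $\frac{1}{n}\sum_{i=1}^n \inv(\pi c_n^i)$. The content lies in the second equality.

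My plan is to exploit the cyclic invariance established in Observation~\ref{obs:u3}. Since $\cwinv(\pi c_n^i)=\cwinv(\pi)$ for every $i$, summing the defining identity $\cwinv(\sigma)=n\cdot\inv(\sigma)-2\winv(\sigma)$ over the $n$ members of the coset yields
\[
n\cdot \cwinv(\pi) \;=\; n\sum_{i=1}^n \inv(\pi c_n^i) \;-\; 2\sum_{i=1}^n \winv(\pi c_n^i),
\]
which rearranges to
\[
\sum_{i=1}^n \inv(\pi c_n^i) \;=\; \cwinv(\pi) \;+\; \frac{2}{n}\sum_{i=1}^n \winv(\pi c_n^i).
\]
Thus the lemma reduces to showing that $\sum_{i=1}^n \winv(\pi c_n^i)$ is a constant (independent of $\pi$) whose value equals $\tfrac{n}{2}\binom{n+1}{3}=\tfrac{n^2(n^2-1)}{12}$.

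To evaluate that sum, I would apply Lemma~\ref{lem:u6} term by term, obtaining
\[
\sum_{i=1}^n \winv(\pi c_n^i) \;=\; n\sum_{k=1}^n k^2 \;-\; \sum_{k=1}^n k\,\sum_{i=1}^n (\pi c_n^i)(k).
\]
The inner sum is handled by a simple but crucial observation: for each fixed position $k$, as $i$ runs over $\{1,\dots,n\}$ the index $c_n^i(k)$ cycles through all of $[n]$, so $(\pi c_n^i)(k)$ takes each value in $[n]$ exactly once and contributes $\binom{n+1}{2}$. What remains is the $\pi$-independent expression $n\cdot\tfrac{n(n+1)(2n+1)}{6}-\binom{n+1}{2}^2$, which a short manipulation reduces to $\tfrac{n^2(n^2-1)}{12}$. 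Substituting back gives $\sum_{i=1}^n \inv(\pi c_n^i)=\cwinv(\pi)+\binom{n+1}{3}$, as required.

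I don't foresee any real obstacle here: the only creative input is recognizing that cyclic invariance of $\cwinv$ reduces the computation to averaging $\winv$ over a full orbit of cyclic shifts, at which point Lemma~\ref{lem:u6} makes the orbit average trivially independent of $\pi$. The rest is bookkeeping.
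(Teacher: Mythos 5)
Your proof is correct, and it takes a genuinely different route from the paper's. The paper proves the identity directly: it iterates the one-step recurrence $\inv(\pi c_n)=\inv(\pi)+n+1-2\pi(1)$ to get $\inv(\pi c_n^i)=\inv(\pi)+\sum_{j=1}^i(n+1-2\pi(j))$, sums over $i$, rearranges the resulting double sum, and only at the very end invokes Lemma~\ref{lem:u6} on the single permutation $\pi$. You instead treat Observation~\ref{obs:u3} as a black box: its invariance statement $\cwinv(\pi c_n^i)=\cwinv(\pi)$ immediately reduces the lemma to showing that $\sum_{i=1}^n\winv(\pi c_n^i)$ is a $\pi$-independent constant, and then Lemma~\ref{lem:u6} plus the orbit-averaging observation (that for each fixed $k$, the values $(\pi c_n^i)(k)$ sweep all of $[n]$ as $i$ varies) makes that constancy transparent. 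Your argument is more structural and avoids the double-sum rearrangement; the paper's is more computational but slightly more self-contained. Interestingly, your orbit-averaging step is exactly the mechanism the paper later uses in the proof of Lemma~\ref{lem:3} for the diameter upper bound, so your approach unifies the two a bit more than the paper's exposition does.
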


\begin{proof}
As in the proof of Observation~\ref{obs:u3},
\[
    \inv(\pi c_n) 
    = \inv(\pi) + n + 1 - 2\pi(1). 
\]
Thus, by iteration
\[
    \inv(\pi c_n^i)
    = \inv(\pi) + \sum_{j=1}^{i} (n + 1 - 2\pi(j)).
\]
Summing over $1\le i\le n$ we obtain
\begin{align*}
    \sum_{i=1}^n \inv(\pi c_n^i)
    &= n \cdot \inv(\pi) + \sum_{i=1}^n \sum_{j=1}^{i} (n + 1 - 2\pi(j)) \\
    &= n \cdot \inv(\pi) + \sum_{j=1}^{n}(n + 1 - j)(n + 1 - 2\pi(j)) \\
    &= n \cdot \inv(\pi) + \sum_{j=1}^{n} j \cdot 2\pi(j) - (n+1) \sum_{j=1}^{n} (2\pi(j) + j) + n(n+1)^2 \\
    &= n \cdot \inv(\pi) + 2\sum_{j=1}^{n} j \cdot \pi(j) - \frac{1}{2}n(n+1)^2
\end{align*}
By Lemma~\ref{lem:u6} and Definition~\ref{def:cyclic winv}, this is equal to
\[
    n \cdot \inv(\pi) + 2\sum_{j=1}^{n} j^2 - 2 \cdot \winv(\pi) - \frac{1}{2}n(n+1)^2
    = \cwinv(\pi)+\binom{n+1}{3}. \qedhere
\]
\end{proof}

\begin{proof}[Proof of Proposition~\ref{prop:u0}]
    By Lemma~\ref{lem:u7} and Corollary~\ref{obs:u5},  
    \begin{align*}
        \max_{\pi \in S_n} E\{ \inv(\sigma): \sigma \in  \pi \ZZ_n\}
    =& \frac{1}{n}\left(\max_{\pi\in S_n} \cwinv(\pi) + \binom{n+1}{3}\right)\\
    =& \frac{1}{n}\left( \binom{n}{3} + \binom{n+1}{3} \right)
    = \frac{(2n-1)(n-1)}{6}.   \qedhere 
\end{align*}
\end{proof}





\section{Lower bound on the Coxeter cyclic sorting time}

In this section we prove the following lower bound on $\Sort_n$, and thus on $\D(\Gamma_n)$ as well.

\begin{theorem}\label{thm:2}
    For every $n\ge 1$
    \[
    \Sort_n 
    \ge \left( \frac{1}{2}-\frac{\pi}{16} \right) n^2 -\frac{3}{2} n. 
    \]
\end{theorem}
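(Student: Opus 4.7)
By Corollary~\ref{lem:1} together with the identity $\inv(\pi c_n^k) = \inv(\pi) + S_k(\pi)$, where $S_k(\pi) := \sum_{j=1}^k (n+1-2\pi(j))$ (established in the proofs of Observation~\ref{obs:u3} and Lemma~\ref{lem:u7}), one has
\[
\Sort_n = \max_{\pi \in S_n}\Bigl(\inv(\pi) + \min_{0 \le k \le n} S_k(\pi)\Bigr).
\]
The plan is to exhibit an explicit family $\{\pi^*_n\}_{n \ge 1}$ in $S_n$ for which $\inv(\pi^*_n)$ is close to the maximum value $\binom{n}{2}$ while $\min_k S_k(\pi^*_n)$ stays above $-\pi n^2/16 - O(n)$, so that the sum yields the claimed $\bigl(\tfrac12-\tfrac{\pi}{16}\bigr)n^2 - \tfrac32 n$.

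The construction is guided by the continuous relaxation. Writing $\pi(j) \approx n\,g(j/n)$ for a measure-preserving $g:[0,1]\to[0,1]$, the normalized quantities $\inv(\pi)/n^2$ and $S_k(\pi)/n^2$ tend to $I(g) := \iint_{s<t,\,g(s)>g(t)} ds\,dt$ and to $F_g(t) := t-2\int_0^t g$ evaluated at $t=k/n$. One is led to maximize $I(g) + \min_t F_g(t)$ over measure-preserving $g$; the naive choice $g(t) = 1-t$ yields $I=1/2$, $\min_t F_g=-1/4$, totaling $1/4$. Doing better requires a specific modification of $g$ near $t=1/2$ that flattens the trough of $F_g$, at a cost in inversions balanced against the gain in $\min_t F_g$. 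The optimal balance is controlled by the area under a circular arc and produces the quarter-disc value
\[
\int_0^{1/2}\sqrt{t(1-t)}\,dt=\frac{\pi}{16}.
\]
Let $\pi^*_n$ be a careful discretization of such an optimizing $g^*$.

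The proof then reduces to two computations. \textbf{(i)} Lemma~\ref{lem:u6}, applied to the essentially-reversed structure of $\pi^*_n$, gives $\inv(\pi^*_n) \ge \binom{n}{2} - O(n)$. \textbf{(ii)} An Abel-summation / Riemann-sum comparison between the discrete partial sums $S_k(\pi^*_n)$ and the continuous functional $F_{g^*}$ gives $\min_k S_k(\pi^*_n) \ge -\pi n^2/16 - O(n)$. Adding (i) and (ii), and absorbing lower-order errors into the $-\tfrac32 n$ term, yields the theorem's bound. The main obstacle is (ii): the specific discretization of $g^*$ must be fine enough that the discrete partial sums track the continuous extremum uniformly in $k$ to within $O(n)$, so that the $\pi/16$ circular-arc budget survives the passage from the continuum to a finite-$n$ inequality valid at every $k \in \{0,1,\ldots,n\}$.
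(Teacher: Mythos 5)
Your high-level identity $\Sort_n = \max_{\pi}\bigl(\inv(\pi) + \min_{0\le k\le n} S_k(\pi)\bigr)$, with $S_k(\pi) = \sum_{j=1}^k(n+1-2\pi(j))$, is the right starting point, and your continuous heuristic correctly locates the circular-arc constant $\pi/16$. But the two numerical claims (i) and (ii) are mutually inconsistent, so the outline does not assemble into a proof.

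Concretely: if $\inv(\pi^*_n) \ge \binom{n}{2} - O(n)$ as in (i), then Lemma~\ref{obs:4} of the paper (applied with $\tau = \pi^*_n$ and $k = \lfloor n/2 \rfloor$) forces the prefix sum $A_k := \sum_{i=1}^k \pi^*_n(i)$ to satisfy
\[
A_k \;\ge\; \inv(\pi^*_n) - \binom{n-k}{2} + k \;\ge\; \binom{n}{2} - \binom{n-k}{2} + k - O(n),
\]
and since $S_k = k(n+1) - 2A_k$, plugging in $k = n/2$ gives $S_{n/2}(\pi^*_n) \le -\tfrac{n^2}{4} + O(n)$. Because $\tfrac{\pi}{16} \approx 0.196 < \tfrac14$, this contradicts (ii). In other words, any permutation close enough to $w_0$ to satisfy (i) necessarily has a trough in $S_k$ that is quadratically \emph{deeper} than $-\pi n^2/16$, so (ii) fails. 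You have placed the $\pi n^2/16$ deficit on the wrong side of the ledger.

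The correct accounting, which is what the paper's proof does, is the reverse. One constructs $\pi_0$ to be \emph{heavy tailed} (Lemma~\ref{lem:lower1}), i.e.\ $S_k(\pi_0) \ge 0$ for all $k$, so that $\min_k S_k(\pi_0) = 0$ exactly and $\minv(\pi_0\ZZ_n) = \inv(\pi_0)$; then one shows $\inv(\pi_0) = \binom{n}{2} - \sum_t k_t$ with $\sum_t k_t \le \tfrac{\pi}{16}n^2 + O(n)$. So the non-trivial quantitative work is in estimating $\inv(\pi^*_n)$ from below by $\binom{n}{2} - \tfrac{\pi}{16}n^2 - O(n)$ (your step (i), not the claimed $\binom{n}{2} - O(n)$), whereas your step (ii) is trivially satisfied with equality $\min_k S_k = 0$ by construction. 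You have also misidentified where the difficulty lies: the Riemann-sum error analysis you flag as ``the main obstacle'' belongs to the $\inv$ estimate, not to $\min_k S_k$. To fix the proposal, replace (i) by $\inv(\pi^*_n) \ge \binom{n}{2} - \tfrac{\pi}{16}n^2 - O(n)$ and (ii) by $\min_k S_k(\pi^*_n) = 0$ (enforced by a greedy heavy-tail construction such as the paper's $\pi_0$), and carry out the Riemann-sum comparison for the $\sum_t k_t$ term.
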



The proof of this theorem uses the following lemma.

\begin{lemma}\label{lem:lower1}
    For every $\pi\in S_n$, $\inv(\pi)=\minv(\pi\ZZ_n)$ if and only if $\pi$ is ``heavy tailed'', namely if, for all $1\le k\le n$, the sum of the last $k$ entries of $\pi$ is at least $\frac{k(n+1)}{2}$;
    equivalently if, for all $1\le k\le n$, the sum of the first $k$ entries of $\pi$ is at most $\frac{k(n+1)}{2}$:
    \[
        \sum_{j=1}^{k} \pi(j) \le \frac{k(n+1)}{2}
        \qquad (\forall k);
   \]
\end{lemma}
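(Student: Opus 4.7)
The plan is to read off the lemma directly from a formula already established in Section~\ref{sec:upper sorting}. Recall that in the proof of Lemma~\ref{lem:u7}, iterating the identity $\inv(\pi c_n)-\inv(\pi)=n+1-2\pi(1)$ gave
\[
    \inv(\pi c_n^k) - \inv(\pi) = \sum_{j=1}^{k}\bigl(n+1-2\pi(j)\bigr) = k(n+1) - 2\sum_{j=1}^{k}\pi(j),
\]
valid for all $0\le k\le n$. Since the coset $\pi\ZZ_n$ is exactly $\{\pi c_n^k :\, 0\le k\le n-1\}$, I would use this identity to translate the assertion $\inv(\pi)=\minv(\pi\ZZ_n)$ into a system of prefix-sum inequalities.

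Concretely, $\inv(\pi)=\minv(\pi\ZZ_n)$ is equivalent to $\inv(\pi c_n^k)\ge \inv(\pi)$ for every $0\le k\le n-1$, and substituting the displayed identity turns this into
\[
    \sum_{j=1}^{k}\pi(j)\le \frac{k(n+1)}{2}
    \qquad (1\le k\le n-1).
\]
For $k=n$ the inequality holds with equality since $\sum_{j=1}^n \pi(j)=\binom{n+1}{2}$, so the range can be harmlessly extended to $1\le k\le n$. This is precisely the prefix-sum formulation in the lemma. The equivalence with the heavy-tail formulation is then immediate: subtracting from $\binom{n+1}{2}$ turns $\sum_{j=1}^k \pi(j)\le \frac{k(n+1)}{2}$ into $\sum_{j=k+1}^n \pi(j)\ge \frac{(n-k)(n+1)}{2}$, which is the heavy-tail condition after the reindexing $k\mapsto n-k$.

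There is essentially no obstacle in the argument; the whole content of the lemma is a repackaging of the telescoping identity already used in the proof of Lemma~\ref{lem:u7}. The only points that require a bit of care are keeping the cyclic-shift convention straight (right multiplication by $c_n$ moves the first entry of the one-line notation to the end, as in the proof of Observation~\ref{obs:u3}) and verifying the elementary passage between the prefix-sum and suffix-sum formulations.
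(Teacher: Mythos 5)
Your proposal is correct and follows essentially the same route as the paper: both reduce the condition $\inv(\pi)=\minv(\pi\ZZ_n)$ to $\inv(\pi c_n^k)\ge\inv(\pi)$ for all cyclic shifts, and both use the telescoping identity $\inv(\pi c_n^k)-\inv(\pi)=k(n+1)-2\sum_{j=1}^k\pi(j)$ (which the paper re-derives in place and you instead cite from the proof of Lemma~\ref{lem:u7}). The only addition you make is spelling out the elementary passage between the prefix-sum and suffix-sum formulations, which the paper leaves implicit.
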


\begin{proof} 
    For every $1\le k\le n$, $\inv(\pi)\le \inv(\pi c_n^k)$ if and only if 
    \begin{align*}
        0 \le \inv(\pi c_n^k) - \inv(\pi) 
        =& \sum_{j=1}^{k} \left( \inv(\pi c_n^j) - \inv(\pi c_n^{j-1}) \right) \\
        =& \sum_{j=1}^k \left( (n - \pi c_n^{j-1}(1)) - (\pi c_n^{j-1}(1) - 1) \right) \\
        =& \sum_{j=1}^k (n+1-2\pi(j))
        = k(n+1) - 2\sum_{j=1}^k \pi(j). 
        \qedhere
    \end{align*}
\end{proof}

\begin{proof}[Proof of Theorem~\ref{thm:2}]
By Corollary~\ref{lem:1}, we need to maximize $\inv(\pi)$ over all the permutations $\pi\in S_n$ which satisfy the conditions of Lemma~\ref{lem:lower1}.


Recall the permutation $w_0 = [n,\ldots,1]$ and consider, as a first approximation, its cyclic shift $w_0 c_n^{m} =[n-m,\ldots,1,n,\ldots,n-m+1]$, where $m:=\lfloor n/2\rfloor$.  
Noting that $w_0 c_n^{m}$ satisfies the conditions of Lemma~\ref{lem:lower1}, one concludes that
\[
    \minv(w_0\ZZ_n)
    = \inv(w_0 c_n^{m})
    = \binom{n-m}{2}+\binom{m}{2}=\binom{\lceil n/2\rceil}{2}+\binom{\lfloor n/2 \rfloor}{2}.
\]
Now, take $w_0 c_n^{m}$ and push the letter $n$ to the left 
as much as possible, say to position $k_1 + 1$, so that the conditions of Lemma~\ref{lem:lower1} are still satisfied.
Then push the letter $n-1$ to the left as much as possible, say to position $k_2 + 2$, and so on. We obtain the permutation
\begin{align}\label{eq:defpi}
    \pi_0
    &:= \left[
    n-m,n-m-1,\dots, n-m-(k_1-1),n,n-m-k_1,\dots, n-m-(k_2-1),n-1, \right. \\
    & \quad\; \left. n-m-k_2,\dots, 
    n-m-(k_t-1),n+1-t,n-m-k_t,\dots \right] \in S_n, \nonumber
\end{align}
where $0 \le k_1 \le \ldots \le k_{m} \le n-m$ have the minimal possible values such that $\pi_0$ still satisfies the conditions of Lemma~\ref{lem:lower1}. 
For example, for $n=12$, $\pi_0 =[6,5,4,3,12,2,11,1,10,9,8,7]$. 

Let us estimate $\inv(\pi_0)$, which will serve as a lower bound for $\Sort_n$.
Considering separately the small values $1 \le i \le m$ and the large values $m+1 \le n+1-t \le n$,
\[
    \minv(\pi_0\ZZ_n)
    = \inv(\pi_0)
    = \sum_{i=1}^{m} (i-1) + \sum_{t=1}^{m}(n-t-k_t)
    = \binom{n}{2} - \sum_{t=1}^{m} k_t.
\]

\medskip

First, assume 
that $n = 2m$ is even.
By the conditions of Lemma~\ref{lem:lower1}, for every $1 \le t \le m$,
\[
    \frac{(k_t+t)(n+1)}{2}
    \ge \sum_{j=1}^{k_t+t} \pi_0(j)
    = \sum_{i=0}^{k_t-1} \left( m-i \right) + \sum_{j=0}^{t-1} \left( n-j \right)
    = \frac{k_t (n+1-k_t)}{2} + \frac{t (2n+1-t)}{2}.
\]
Simplifying, this yields 
\[
    k_t^2 \ge t(n-t)
    \qquad (\forall\, 1\le t \le m).
\]
If these inequalities are satisfied, then the inequalities of Lemma~\ref{lem:lower1} for indices other than $k_1+1, \cdots, k_m+m$ obviously follow.
Minimizing the parameters $k_t$, for all $1\le t\le m$, we obtain approximately
\[
    \sum_{t=1}^{m} k_t
    \sim \sum_{t=1}^{n/2} \left( t(n-t) \right)^{1/2}
    \sim \int_0^{n/2} \left( x(n-x) \right)^{1/2} dx.
\]
Letting $x = ny = n\sin^2\theta$, we obtain 
\begin{align*}
    \int_0^{n/2}\left (x(n-x)\right)^{1/2} dx
    =& n^2 \int_0^{1/2}\left (y(1-y)\right)^{1/2} dy
    = 2n^2 \int_0^{\pi/4}\sin^2\theta \cos^2\theta d\theta \\
    =& \frac{n^2}{2} \int_0^{\pi/4}\sin^2(2\theta) d\theta
    = \frac{n^2}{4} \int_0^{\pi/4}(1-\cos(4\theta)) d\theta
    = \frac{\pi}{16}n^2.
\end{align*}
This yields 
\[
    \Sort_n
    = \max_{\pi \in S_n} \minv(\pi \ZZ_n)
    \ge \minv(\pi_0 \ZZ_n)
    = \binom{n}{2} - \sum_{t=1}^m k_t 
    \sim \binom{n}{2} - \frac{\pi}{16} n^2.
\]

For a more precise lower bound, let us bound the error in this approximation. Since
\[
    k_t 
    = \left\lceil (t(n-t))^{1/2} \right\rceil
    \qquad (1 \le t \le m),
\]
we conclude that
\begin{equation}\label{eq:even_error1}
    0 
    \le \sum_{t=1}^{n/2} k_t - \sum_{t=1}^{n/2} (t(n-t))^{1/2}
    \le n/2.    
\end{equation}
On the other hand, $f(x) := \left(x(n-x)\right)^{1/2}$ is a monotone increasing function for $0\le x\le n/2$, and therefore 
\[
    f(t-1) \cdot 1 
    \le \int_{t-1}^{t} f(x)dx
    \le f(t)\cdot 1
    \qquad (1 \le t \le n/2).
\]
Hence 
\[
    0
    \le \sum_{t=1}^{n/2} f(t) - \int_0^{n/2} f(x)dx
    \le \sum_{t=1}^{n/2} f(t) - \sum_{t=1}^{n/2} f(t-1)
    = f \left( \frac{n}{2} \right)-f(0)
    = \frac{n}{2}
\]
or, explicitly,
\begin{equation}\label{eq:even_error2}
    0 
    \le \sum_{t=1}^{n/2} (t(n-t))^{1/2} - \frac{\pi}{16} n^2
    \le \frac{n}{2}.    
\end{equation}
Adding inequalities~\eqref{eq:even_error1} and~\eqref{eq:even_error2} yields
\[
    0 
    \le \sum_{t=1}^{n/2} k_t  - \frac{\pi}{16} n^2
    \le n.    
\]
Thus, for even $n$:
\[
    \Sort_n\ge \inv(\pi_0) 
    = \binom{n}{2} - \sum_{t=1}^{n/2} k_t  
    \ge \left( \frac{1}{2} - \frac{\pi}{16} \right) n^2 - \frac{3}{2} n.
\]

\medskip

Now assume that $n = 2m+1$ is odd. 
The computations are similar to those for even $n$ 
but a little more complicated; in this case we only estimate, but do not evaluate, the integral.


By the conditions of Lemma~\ref{lem:lower1}, for every 
$1 \le t \le m$
\[
    \frac{(k_t+t)(n+1)}{2}
    \ge \sum_{j=1}^{k_t+t} \pi_0(j)
    = \sum_{i=0}^{k_t-1} \left( m+1-i \right) + \sum_{j=0}^{t-1} \left( n-j \right)
    = \frac{k_t (n+2-k_t)}{2} + \frac{t (2n+1-t)}{2}.
\]
Simplifying, this yields 
\[
    k_t^2 - k_t \ge t(n-t)
    \qquad (\forall\, 1\le t \le m)
\]
or, equivalently,
\[
    k_t \ge (t(n-t) +1/4)^{1/2} + 1/2
    \qquad (\forall\, 1\le t \le m).
\]
Again,
\[
    \minv(\pi_0\ZZ_n)
    = \inv(\pi_0)
    = \sum_{i=1}^{m+1} (i-1) + \sum_{t=1}^{m}(n-t-k_t)
    = \binom{n}{2} - \sum_{t=1}^{m} k_t.
\]
Minimizing the parameters $k_t$, for all $1\le t\le m$, we obtain
\[
    \sum_{t=1}^{m} k_t
    \sim \sum_{t=1}^{(n-1)/2} \left( (t(n-t) + 1/4)^{1/2} + 1/2 \right)
    \sim \int_0^{(n-1)/2} \left( (x(n-x) + 1/4)^{1/2} + 1/2 \right) dx.
\]
We now bound this integral. 
For every $
0< x < n$, 
\[
    (x(n-x) + 1/4)^{1/2} 
    \le \sqrt{x(n-x)}+\frac{1}{8 \sqrt{x(n-x)}}.
\]
Denoting $g(x) := (x(n-x) + 1/4)^{1/2} + 1/2$, we get 
\begin{align*}
    \int_{0}^{(n-1)/2} g(x) dx
    &\le \int_{0}^{(n-1)/2} \left( \sqrt{x(n-x)} + \frac{1}{8\sqrt{x(n-x)}} + \frac{1}{2} \right) dx \\
     &\le \int_0^{n/2} \left( \sqrt{x(n-x)}+\frac{1}{8\sqrt{x(n-x)}} \right) dx - \int_{(n-1)/2}^{n/2}  \sqrt{x(n-x)} dx + \int_{0}^{(n-1)/2} \frac{dx}{2} \\
     &\le \frac{\pi}{16}(n^2+1) - \frac{1}{2} \cdot \sqrt{\frac{n-1}{2} \cdot \frac{n+1}{2}} + \frac{n-1}{4} 
     \le \frac{\pi}{16}(n^2+1).    
\end{align*}
Now 
\[
    k_t  
    = \left\lceil g(t) \right\rceil
    \qquad (1 \le t \le m)
\]
implies that
\begin{equation}\label{eq:odd_error1}
    0 
    \le \sum_{t=1}^{(n-1)/2} k_t  - \sum_{t=1}^{(n-1)/2} g(t)
    \le \frac{n-1}{2}.    
\end{equation}
On the other hand, $g(x)$ is a monotone increasing function for $0\le x\le n/2$, and therefore 
\[
    g(t-1) \cdot 1 
    \le \int_{t-1}^{t} g(x)dx
    \le g(t)\cdot 1
    \qquad (1 \le t \le (n-1)/2).
\]
Hence 
\begin{align}\label{eq:odd_error2}
    0
    \le \sum_{t=1}^{(n-1)/2} g(t) - \int_0^{(n-1)/2} g(x)dx
    &\le \sum_{t=1}^{(n-1)/2} g(t) - \sum_{t=1}^{(n-1)/2} g(t-1) \\
    &= g\left( \frac{n-1}{2} \right) - g(0)
    = \frac{n+1}{2} - 1
    = \frac{n-1}{2}.
    \nonumber
\end{align}
Adding inequalities~\eqref{eq:odd_error1} and~\eqref{eq:odd_error2} yields
\[
    0 
    \le \sum_{t=1}^{(n-1)/2} k_t  - \int_0^{(n-1)/2} g(x)dx
    \le n-1,    
\]
Altogether, we obtain 
for odd $n$:
\[
    \Sort_n
    \ge \inv(\pi_0)
    = \binom{n}{2}-\sum\limits_{t=1}^m k_t
    \ge \binom{n}{2}-\frac{\pi}{16}(n^2+1)-(n-1)> \left( \frac{1}{2}-\frac{\pi}{16} \right) n^2 -\frac{3}{2} n. 
\]

\end{proof}

\section{Upper bound on the diameter}

In this section we prove the following. 

\begin{theorem}\label{thm:1}
    For every $n\ge 1$,
    \[
        \D(\Gamma_n) 
        \le \frac{3n^2 - 4n + 1}{8}.
    \]
\end{theorem}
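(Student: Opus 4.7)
The plan is to extend the averaging argument of Proposition~\ref{prop:u0} to the full coset $\bar\gamma_1 \ZZ_n \bar\gamma_2^{-1}$ appearing in Lemma~\ref{lem:2}. Setting $\pi := \bar\gamma_1$ and $\tau := \bar\gamma_2^{-1}$, that lemma gives
\[
    \dist_{\Gamma_n}(\gamma_1,\gamma_2)
    = \min_{0 \le i < n} \inv(\pi c_n^i \tau)
    \le \frac{1}{n}\sum_{i=0}^{n-1} \inv(\pi c_n^i \tau),
\]
so it suffices to bound the right-hand side uniformly in $\pi,\tau$ by $\frac{3n^2-4n+1}{8}$.

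The first step is to derive a closed form for $\sum_i \inv(\pi c_n^i \tau)$, generalizing Lemma~\ref{lem:u7}. I would parametrize ordered pairs of positions by their $c_n^i$-invariant cyclic difference $d$ (computed through $\tau$). This yields the bilinear identity
\[
    \sum_{i=0}^{n-1} \inv(\pi c_n^i \tau)
    = \sum_{d=1}^{n-1} n_\tau(d)\, M_\pi(d),
\]
where $M_\pi(d) := |\{x \in \ZZ_n : \pi(x) > \pi(x+d)\}|$ counts cyclic descents of $\pi$ at distance $d$, and $n_\tau(d) := |\{a\in\ZZ_n : \tau^{-1}(a) < \tau^{-1}(a+d)\}|$ is its analogue on the $\tau$ side. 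Both satisfy the antisymmetries $M_\pi(d)+M_\pi(n-d) = n$ and $n_\tau(d)+n_\tau(n-d)=n$, and each has total $\binom{n}{2}$.

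Writing $n_\tau(d) = (n-d) + \delta_\tau(d)$ with $\delta_\tau(n-d) = -\delta_\tau(d)$ and $\sum_d \delta_\tau(d) = 0$, and applying Lemma~\ref{lem:u7} to the leading term, one obtains
\[
    \sum_{d=1}^{n-1} n_\tau(d)\, M_\pi(d)
    = \cwinv(\pi) + \binom{n+1}{3} + \sum_{d=1}^{n-1}\delta_\tau(d)\Big(M_\pi(d) - \tfrac{n}{2}\Big).
\]
Using $\inv(\sigma) = \inv(\sigma^{-1})$ interchanges the roles of $\pi$ and $\tau$ and yields a dual identity in which $\cwinv(\tau)$ appears; averaging the two expressions produces a formula symmetric in $\pi,\tau$. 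The cross term is then estimated by Cauchy--Schwarz, and each of the resulting $\ell^2$-norms $\sum_d \delta_\tau(d)^2$ and $\sum_d (M_\pi(d) - n/2)^2$ is controlled via the uniform bound $\cwinv \le \binom{n}{3}$ from Corollary~\ref{obs:u5} together with the antisymmetries.

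The hard part will be recovering the sharp constant $\tfrac{3}{8}$: a crude Cauchy--Schwarz on the cross term yields only a constant on the order of $\tfrac{1}{2}$, which is too weak. Tightening it will require either a sharper estimate on $\sum_d (M_\pi(d)-n/2)^2$ that exploits the realizability of $M_\pi$ as the cyclic-descent profile of an actual permutation (not every antisymmetric sequence arises), or an explicit structural choice of the shift $i$ in the spirit of the heavy-tail argument of Lemma~\ref{lem:lower1}, picking the representative whose cyclic-descent profile is most balanced; this selection step is where I expect the coefficient $\tfrac{3}{8}$ to emerge.
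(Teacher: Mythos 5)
Your proposal is genuinely incomplete, and you acknowledge as much in the last paragraph: a crude Cauchy--Schwarz on the cross term gives a constant around $\tfrac{1}{2}$, and neither of the two directions you suggest for closing the gap is carried out. So as written, this is not a proof of the stated bound.

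The deeper issue is methodological. You are trying to bound $\min_i \inv(\pi c_n^i\tau)$ by the \emph{average} of $\inv$ over the double coset, and then to estimate that average via the bilinear identity $\sum_i \inv(\pi c_n^i\tau) = \sum_d n_\tau(d)M_\pi(d)$. Controlling this bilinear form sharply requires controlling the $\ell^2$-norms $\sum_d(M_\pi(d)-n/2)^2$ and $\sum_d\delta_\tau(d)^2$, which are governed by realizability constraints on cyclic-descent profiles that the paper never develops and that your sketch does not supply. The paper avoids all of this by averaging a \emph{linear} statistic instead of $\inv$ itself. Concretely, Lemma~\ref{lem:3} observes that each coordinate $\tau(i)$ is uniformly distributed over $[n]$ as $\tau$ ranges over $\pi_1\ZZ_n\pi_2^{-1}$, so for each $k$ there is a representative with $\sum_{i=1}^k\tau(i)\le k(n+1)/2$. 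Lemma~\ref{obs:4} then supplies the deterministic inequality $\inv(\tau)\le\binom{n-k}{2}-k+\sum_{i=1}^k\tau(i)$ (inversions straddling position $k$ are controlled by the prefix sum; those inside each block are bounded trivially). Combining the two gives $\min_\tau\inv(\tau)\le\binom{n}{2}-\tfrac{k(n-k)}{2}$ for every $k$, and optimizing $k\approx n/2$ yields $\tfrac{3}{8}n^2+O(n)$. This is where the coefficient $\tfrac{3}{8}$ actually comes from: not from balancing a cyclic-descent profile, but from maximizing $k(n-k)$. If you want to salvage your approach, the missing ingredient is a proof that $\sum_d(M_\pi(d)-n/2)^2$ is maximized at $\pi=w_0$; with that in hand the Cauchy--Schwarz step would close and in fact give the stronger bound $\tfrac{(2n-1)(n-1)}{6}$, but that extremal lemma is a nontrivial claim that your sketch neither states nor proves.
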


The proof of Theorem~\ref{thm:1} requires several lemmas, which may be of independent interest. 

The following expression for the diameter of $\Gamma_n$ follows immediately from Lemma~\ref{lem:2}.

\begin{corollary}\label{cor:diam} 
For every $n\ge 1$,
    \[
        \D(\Gamma_n)
        = \max_{\pi_1,\pi_2 \in S_n} \min \{\inv(\tau) :\, \tau \in \pi_1 \ZZ_n \pi_2^{-1}\},
    \]
    where $\ZZ_n$ is the subgroup of $S_n$ generated by the $n$-cycle $c_n =(1,2,\dots,n)$.
\end{corollary}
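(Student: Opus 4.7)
The plan is to derive the claimed formula as an essentially mechanical consequence of Lemma~\ref{lem:2}, combined with the bijection $\gamma \mapsto \bar\gamma \ZZ_n$ from Remark~\ref{rem:iso}. Since both ingredients are already in hand, there is no real obstacle; the only point that needs checking is that the double coset $\pi_1 \ZZ_n \pi_2^{-1}$ depends only on the cosets $\pi_1 \ZZ_n$ and $\pi_2 \ZZ_n$, so that enlarging the index set of the outer max from $C_n \times C_n$ (equivalently $S_n/\ZZ_n \times S_n/\ZZ_n$) to all of $S_n \times S_n$ does not change the value.

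Concretely, first I would expand the diameter via its definition, $\D(\Gamma_n) = \max_{\gamma_1, \gamma_2 \in C_n} \dist_{\Gamma_n}(\gamma_1, \gamma_2)$, and substitute the distance formula of Lemma~\ref{lem:2} to obtain
\[
\D(\Gamma_n) = \max_{\gamma_1, \gamma_2 \in C_n} \min\{\inv(\sigma):\, \sigma \in \bar\gamma_1 \ZZ_n \bar\gamma_2^{-1}\}.
\]
By Remark~\ref{rem:iso}, the correspondence $\gamma \mapsto \bar\gamma \ZZ_n$ is a bijection between $C_n$ and $S_n/\ZZ_n$, so the outer maximum is equivalently taken over pairs of left cosets.

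It remains to verify that the inner set $\pi_1 \ZZ_n \pi_2^{-1}$ is well defined on the level of cosets. For this I would note that, for any $k_1, k_2 \in \ZZ$,
\[
(\pi_1 c_n^{k_1}) \ZZ_n (\pi_2 c_n^{k_2})^{-1}
= \pi_1 (c_n^{k_1} \ZZ_n c_n^{-k_2}) \pi_2^{-1}
= \pi_1 \ZZ_n \pi_2^{-1},
\]
since $c_n^{k_1}, c_n^{-k_2} \in \ZZ_n$. Consequently, the inner minimum in the displayed formula depends only on the cosets $\pi_1 \ZZ_n$ and $\pi_2 \ZZ_n$, and replacing the maximum over $S_n/\ZZ_n \times S_n/\ZZ_n$ by a maximum over $S_n \times S_n$ leaves the value unchanged. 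Combining these steps yields exactly the formula of Corollary~\ref{cor:diam}.
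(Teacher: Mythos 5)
Your proposal is correct and follows the same route the paper intends: the paper derives this corollary directly from Lemma~\ref{lem:2} together with the bijection of Remark~\ref{rem:iso}, exactly as you do. Your extra check that $\pi_1 \ZZ_n \pi_2^{-1}$ depends only on the cosets $\pi_1\ZZ_n$ and $\pi_2\ZZ_n$ is precisely the (routine) point that makes the passage from pairs of full cycles to arbitrary pairs $\pi_1,\pi_2\in S_n$ legitimate.
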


\begin{lemma}\label{obs:4}
For every permutation $\tau\in S_n$ and $0\le k\le n$, 
\[
    \inv(\tau)
    \le \binom{n-k}{2}-k + \sum_{i=1}^k \tau(i).  
\]
\end{lemma}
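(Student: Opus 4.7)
The plan is to decompose $\inv(\tau)$ according to the position of the \emph{left} endpoint of each inversion, splitting at the threshold $k$. Explicitly, I would write
\[
    \inv(\tau)
    = \sum_{i=1}^{k} \left|\{j>i :\, \tau(j) < \tau(i)\}\right|
    + \sum_{i=k+1}^{n} \left|\{j>i :\, \tau(j) < \tau(i)\}\right|.
\]
The second sum only involves pairs $i<j$ both lying in $\{k+1, \ldots, n\}$, so it is bounded above by $\binom{n-k}{2}$, the total number of such pairs. This yields the $\binom{n-k}{2}$ term on the right-hand side.

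For the first sum, the key observation is that for each $i \le k$, the number of $j > i$ with $\tau(j) < \tau(i)$ is at most the total number of values in $[n]$ strictly less than $\tau(i)$, which equals $\tau(i) - 1$. (We ignore here whether such smaller values actually appear to the right of position $i$; this is exactly where slack arises, but for an upper bound it is harmless.) Summing over $i = 1, \ldots, k$ gives
\[
    \sum_{i=1}^{k} \left|\{j>i :\, \tau(j) < \tau(i)\}\right|
    \le \sum_{i=1}^{k} (\tau(i) - 1)
    = \sum_{i=1}^{k} \tau(i) - k.
\]
Adding the two bounds yields the desired inequality. There is no real obstacle here; the proof is a single short paragraph, and the main thing to get right is the choice of splitting the inversions by the position of the \emph{left} endpoint (rather than by values), which makes both pieces of the bound transparent.
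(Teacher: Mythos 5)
Your proof is correct, and it arrives at the bound via a slightly different and arguably cleaner decomposition than the paper's. The paper partitions the inversions into three classes according to whether each of $i,j$ lies in $\{1,\dots,k\}$ or $\{k+1,\dots,n\}$: cross-inversions ($i\le k<j$), within-prefix inversions ($i<j\le k$), and within-suffix inversions ($k<i<j$). For the cross-inversions it gets an \emph{exact} count $\sum_{i=1}^k\tau(i)-\binom{k+1}{2}$ by first assuming (WLOG, since neither the cross-inversion count nor $\sum_{i\le k}\tau(i)$ depends on the order of the prefix entries) that $\tau(1)<\cdots<\tau(k)$, and then bounds the other two classes trivially by $\binom{k}{2}+\binom{n-k}{2}$. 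You instead partition by the position of the \emph{left} endpoint alone, merging the paper's cross-inversions and within-prefix inversions into one class, and bound each term in that class directly by $\tau(i)-1$. The two extra slacks balance exactly: the paper's $\binom{k}{2}$ allowance for within-prefix pairs equals the gap between your bound $\sum\tau(i)-k$ and the paper's exact cross count $\sum\tau(i)-\binom{k+1}{2}$, so the final inequalities coincide. The net effect is that your version avoids the WLOG step and the exact-count bookkeeping, which is a genuine simplification; the paper's version has the minor advantage of isolating the cross-inversion count exactly, which could be reused if one wanted tighter case analysis elsewhere.
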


\begin{proof} 
Fix $0\le k\le n$, and evaluate 
\[
   |\{(i,j) :\, i\le k<j,\, \tau(i)>\tau(j)\}|. 
\]
We can assume, without loss of generality, that $\tau(1)<\tau(2)<\cdots<\tau(k)$. 
Then, for every $1\le i\le k$,
\[
    |\{j :\, k<j,\, \tau(i)>\tau(j)\}|=\tau(i)-i.
\]


Thus  
\[
   |\{(i,j) :\, i\le k<j,\, \tau(i)>\tau(j)\}| 
   = \sum_{i=1}^k (\tau(i)-i)
   = \sum_{i=1}^k \tau(i) - \binom{k+1}{2}.
\]
Note that, obviously, 
\[
    |\{(i,j) :\, i< j \le k,\, \tau(i)>\tau(j)\}| +
    |\{(i,j) :\, k < i < j,\, \tau(i)>\tau(j)\}|
    \le \binom{k}{2} + \binom{n-k}{2}. 
\]
Altogether, 
for every fixed $0\le k\le n$,
\begin{align*}
    \inv(\tau)
    = |\{(i,j) :\, i<j,\, \tau(i)>\tau(j)\}| 
    &\le \sum_{i=1}^k \tau(i) - \binom{k+1}{2} + \binom{k}{2} + \binom{n-k}{2} \\
    &= \sum_{i=1}^k \tau(i) - k + \binom{n-k}{2}.
    \qedhere
\end{align*}
\end{proof}

The key lemma in the proof of Theorem~\ref{thm:1} is the following.  

\begin{lemma}\label{lem:3}
    For 
    any $\pi_1,\pi_2 \in S_n$ and 
    any $0 \le k \le n$, there exists $\tau \in \pi_1 \ZZ_n \pi_2^{-1}$,  such that
    \[
        \sum_{i=1}^k \tau(i)
        \le \frac{k(n+1)}{2}. 
    \]
\end{lemma}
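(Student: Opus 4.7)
The plan is to prove Lemma~\ref{lem:3} by a straightforward averaging argument over the $n$ elements of the coset $\pi_1 \ZZ_n \pi_2^{-1}$, which are precisely $\tau_j := \pi_1 c_n^j \pi_2^{-1}$ for $j = 0, 1, \ldots, n-1$. The idea is that the mean of $\sum_{i=1}^k \tau_j(i)$ over $j$ equals $\frac{k(n+1)}{2}$ exactly, so at least one $\tau_j$ must attain a value no larger than this mean.

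The key computation: for each fixed $1 \le i \le n$, writing $a := \pi_2^{-1}(i)$, one has
\[
    \tau_j(i) = \pi_1(c_n^j(a)).
\]
Since $c_n$ is a full $n$-cycle, as $j$ ranges over $\{0, 1, \ldots, n-1\}$ the element $c_n^j(a)$ ranges over all of $[n]$ exactly once. Hence
\[
    \sum_{j=0}^{n-1} \tau_j(i) = \sum_{b=1}^{n} \pi_1(b) = \frac{n(n+1)}{2}.
\]

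Summing over $1 \le i \le k$ and swapping the order of summation yields
\[
    \sum_{j=0}^{n-1} \sum_{i=1}^{k} \tau_j(i) = k \cdot \frac{n(n+1)}{2},
\]
so the average value of $\sum_{i=1}^k \tau_j(i)$ over the $n$ cosets representatives is exactly $\frac{k(n+1)}{2}$. Consequently, there must exist at least one $j^\ast$ such that $\sum_{i=1}^k \tau_{j^\ast}(i) \le \frac{k(n+1)}{2}$, and taking $\tau := \tau_{j^\ast}$ completes the proof.

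There is essentially no obstacle here; the lemma is a clean pigeonhole/averaging statement. The only thing one needs to verify carefully is that $c_n^j(a)$ ranges over all of $[n]$ as $j$ varies, which follows immediately from $c_n$ being an $n$-cycle. This lemma is then combined with Lemma~\ref{obs:4} to optimize over $k$ and give the $\frac{3}{8} n^2 + O(n)$ upper bound on the diameter.
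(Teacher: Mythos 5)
Your proof is correct and follows essentially the same averaging argument as the paper: you both observe that for each fixed position $i$, the values $\tau(i)$ range over all of $[n]$ as $\tau$ runs through the coset $\pi_1 \ZZ_n \pi_2^{-1}$ (since $c_n$ is a full cycle), conclude the expected prefix sum is exactly $\frac{k(n+1)}{2}$, and then invoke pigeonhole. The only cosmetic difference is that you spell out the double sum explicitly whereas the paper phrases it in terms of $E_{\tau}$.
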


\begin{proof}
First notice that, for any $\pi_1, \pi_2 \in S_n$ and any $1\le i\le n$,
\begin{align*}
    \{\tau(i) :\, \tau \in \pi_1 \ZZ_n \pi_2^{-1}\}
    = \{\pi_1 c_n^j \pi_2^{-1}(i) \,:\, 1\le j \le n\} 
    = \{\pi_1(t) :\, 1\le t \le n\}
    = \{1,\dots,n\}.
\end{align*}
It follows that, for any $1 \le i \le n$, the expected value, in position $i$, of a random permutation in $\pi_1 \ZZ_n \pi_2^{-1}$ is  
\[
    E_{\tau \in \pi_1 \ZZ_n \pi_2^{-1}} \, [\tau(i)]
    = \frac{1}{n}\sum_{j=1}^n \pi_1 c_n^j \pi_2^{-1}(i)
    = \frac{1}{n} (1 + \dots + n) 
    = \frac{n+1}{2}.
\]
Hence, the expected sum of values of $\tau \in \pi_1 \ZZ_n \pi_2^{-1}$ on a prefix of length $k$ is  
\[
    E_{\tau \in  \pi_1 \ZZ_n \pi_2^{-1}} \left[ \sum_{i=1}^k \tau(i) \right]
    = \sum_{i=1}^k E_{\tau \in  \pi_1 \ZZ_n \pi_2^{-1}} \,[\tau(i)]
    = k\cdot \frac{n+1}{2}.
\]    
There must exist $\tau \in \pi_1 \ZZ_n \pi_2^{-1}$ for which $\sum_{i=1}^k \tau(i)$ is smaller or equal to its expected value, and this completes the proof. 
\end{proof}

\begin{proof}[Proof of Theorem~\ref{thm:1}] 
By Lemma~\ref{obs:4} and Lemma~\ref{lem:3}, for any $\pi_1, \pi_2 \in S_n$ and any $0 \le k \le n$,
\[
    \min_{\tau \in \pi_1 \ZZ_n \pi_2^{-1}} \inv(\tau)
    \le \binom{n-k}{2} - k + \min_{\tau \in \pi_1 \ZZ_n \pi_2^{-1}} \sum_{i=1}^{k} \tau(i)
    \le \binom{n-k}{2} - k + \frac{k(n+1)}{2}
    = \binom{n}{2} - \frac{k(n-k)}{2}.
\]
The RHS is minimized for $k = n/2$ (if $n$ is even) or $k = (n \pm 1)/2$ (if $n$ is odd). It follows that
\[
    \min_{\tau \in \pi_1 \ZZ_n \pi_2^{-1}} \inv(\tau)
    \le \binom{n}{2} - \frac{n^2 - \chi(n \text{ odd})}{8}
    = \frac{3n^2 - 4n + \chi(n \text{ odd})}{8},
\]
where $\chi(n \text{ odd})$ is $1$ if $n$ is odd, and $0$ otherwise. 
Corollary~\ref{cor:diam} completes the proof.
\end{proof}


\section{Final remarks and open problems}

An improvement of Theorem~\ref{thm:main1} is desired. 


\begin{conjecture}\label{conj:0}
The following asymptotic equality holds:
\[
\Sort_n=\frac{8-\pi}{16}\cdot n^2 +O(n).
\]
\end{conjecture}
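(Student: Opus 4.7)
The plan is to establish the matching upper bound $\Sort_n \le \frac{8-\pi}{16} n^2 + O(n)$. By Lemma~\ref{lem:lower1}, this reduces to bounding $\inv(\sigma)$ for every heavy-tailed $\sigma \in S_n$. The starting point is a refinement of Lemma~\ref{obs:4}: the number of inversion pairs $(i,j)$ with $i \le k < j$ is exactly $\sum_{j=1}^k \sigma(j) - k - \binom{k}{2}$, independent of the internal arrangement within the prefix and suffix. This identity yields an exact decomposition of $\inv(\sigma)$ in terms of the positions of the large values.

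Set $m = \lfloor n/2 \rfloor$ and let $q_1 < q_2 < \cdots < q_m$ be the sorted positions of the $m$ largest values of $\sigma$. Writing $A = \{q_1, \ldots, q_m\}$ and $B = [n] \setminus A$, one obtains
\[
\inv(\sigma) = \alpha + \beta + \gamma, \qquad \gamma := m(n-m) + \binom{m+1}{2} - \sum_{i=1}^m q_i,
\]
where $\alpha$ counts inversions within $A$-positions and $\beta$ within $B$-positions. Call $\sigma$ \emph{structured} if the large values are placed at $A$ in decreasing order and the small values at $B$ likewise, so that $\alpha = \beta = \binom{m}{2}$ are maximal. For even $n = 2m$, the heavy-tailed condition for a structured $\sigma$ reduces to a lattice-path inequality: writing $i = |A \cap [1,k]|$ and $j = |B \cap [1,k]|$, the path from $(0,0)$ to $(m,m)$ must satisfy $j^2 \ge i(n-i)$ throughout, forcing $j_t \ge \lceil\sqrt{t(n-t)}\rceil$ after the $t$-th east step. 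Hence $\inv(\sigma) = \binom{n}{2} - \sum_t j_t \le \binom{n}{2} - \sum_t \lceil\sqrt{t(n-t)}\rceil$, which by the integral estimate already used in the proof of Theorem~\ref{thm:2} equals $\frac{8-\pi}{16} n^2 + O(n)$. In particular, among structured heavy-tailed permutations, the maximum is attained by $\pi_0$; the odd case is analogous with $j(j-1) \ge i(n-i)$.

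The principal obstacle is extending this estimate to non-structured heavy-tailed $\sigma$, where $\alpha + \beta < 2\binom{m}{2}$ but $\gamma$ may be larger (since $A$ can contain smaller positions). I envision two routes. The first is a combinatorial exchange argument: starting from an arbitrary heavy-tailed $\sigma$, apply successive adjacent swaps that preserve heavy-tailedness and weakly increase $\inv$, eventually reaching a structured representative; swaps blocked by a tight prefix-sum constraint must be enabled by first creating slack through compensating moves. The second is variational: in the continuous limit $n \to \infty$, maximize
\[
I(\phi) = \iint_{x_1 < x_2} \chi\bigl(\phi(x_1) > \phi(x_2)\bigr)\, dx_1\, dx_2
\]
over measure-preserving $\phi : [0,1] \to [0,1]$ satisfying $\int_0^x \phi(t)\, dt \le x/2$ for all $x$; the extremum $\frac{8-\pi}{16}$ should be attained by the continuum analog of $\pi_0$. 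The variational route is conceptually cleaner, but controlling the $O(n)$ error in the translation back to the discrete setting is the main technical challenge either way.
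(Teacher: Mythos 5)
This statement is a \emph{conjecture} in the paper, not a theorem: the paper only proves the lower bound $\Sort_n \ge \frac{8-\pi}{16} n^2 - \frac{3}{2}n$ (Theorem~\ref{thm:2}) and the weaker upper bound $\Sort_n \le \frac{1}{3}n^2 + O(n)$ (Theorem~\ref{thm:u0}); the matching upper bound you are after is precisely what is left open. Your reduction to bounding $\inv$ over heavy-tailed permutations via Lemma~\ref{lem:lower1} is correct, and your prefix-position decomposition $\inv(\sigma) = \alpha + \beta + \gamma$ together with the lattice-path reformulation $j^2 \ge i(n-i)$ is sound for what you call \emph{structured} heavy-tailed permutations; indeed that analysis reproduces, in slightly different language, the paper's computation for $\pi_0$ in the proof of Theorem~\ref{thm:2}, and it correctly shows $\pi_0$ is optimal within the structured class.

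The genuine gap is the step from structured to arbitrary heavy-tailed $\sigma$, which you flag but do not close. Neither of the two routes you sketch is a proof. For the exchange argument, the obstruction is concrete: a swap that puts two large values into decreasing order within the $A$-positions strictly increases certain intermediate prefix sums, so it can destroy heavy-tailedness, and the compensating moves you would need to restore it (shifting $A$ to the right) decrease $\gamma$; whether the net effect on $\inv$ is nonnegative is exactly the question at hand and cannot be assumed. In fact, if the large values sit in \emph{increasing} order at the $A$-positions, the heavy-tailed constraint becomes $i^2 \le j(n-j)$ rather than $j^2 \ge i(n-i)$, allowing $A$ to sit farther left and making $\gamma$ larger while $\alpha = 0$; comparing these two regimes (and the interpolations between them) is the real content of the conjecture and is not addressed. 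The variational route is plausible but you have neither identified the extremal $\phi$ nor controlled the $O(n)$ discretization error, so it too remains a heuristic. In short: the proposal is a reasonable plan of attack and correctly locates the difficulty, but it does not prove the conjecture, consistent with the fact that the paper itself leaves it open (verified only computationally for $n \le 12$).
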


Recall the permutation $\pi_0$ from~\eqref{eq:defpi}. 
Computer experiments suggest that, furthermore, for every $n\ge 1$
\begin{equation}\label{eq:conj0}
\Sort_n=\inv(\pi_0).
\end{equation}
By Remark~\ref{rem:iso}, together with Corollary~\ref{lem:1} and the definition of $\pi_0$, 
\eqref{eq:conj0} 
is equivalent to the following equality:
\[
\max_{\pi\in S_n} \dist_{X(S_n/\ZZ_n, \Phi_n)} (\pi \ZZ_n, c_n \ZZ_n) = \dist_{X(S_n/\ZZ_n, \Phi_n)} (\pi_0 \ZZ_n, c_n \ZZ_n) \qquad(\forall n). 
\]
Equality~\eqref{eq:conj0} 
was verified for every $n\le 12$.

\smallskip

Recall that the maximal value of the $\minv$ statistics on $S_n/\ZZ_n$ is equal to the Coxeter sorting time $\Sort_n$ (Corollary~\ref{lem:1}). Hence, 
Theorem~\ref{thm:main1}  provides bounds on the maximal value of $\minv$ on $S_n/\ZZ_n$.  The actual distribution of $\minv$ deserves further study. We conjecture the following. 

\begin{conjecture}
    The generating function of $\minv$ on $S_n/\ZZ_n$ is unimodal.
\end{conjecture}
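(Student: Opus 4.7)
The plan is to establish unimodality of $F_n(q) := \sum_{P \in S_n / \ZZ_n} q^{\minv(P)}$ by constructing an order-raising injection on the graded set of cosets. For every $k$ below the mode one would inject cosets with $\minv = k$ into cosets with $\minv = k+1$, and symmetrically build a lowering injection for $k$ above the mode. A stronger target, which would also imply the conjecture, is log-concavity $a_k^2 \ge a_{k-1} a_{k+1}$ of the coefficient sequence $(a_k)$.

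First I would select canonical representatives of cosets. By Lemma~\ref{lem:lower1}, every coset has at least one heavy-tailed representative, and breaking ties (when several cyclic shifts attain the minimum $\inv$) by, say, lexicographic order produces a set $\mathcal{C}_n \subset S_n$ of size $(n-1)!$ with $F_n(q) = \sum_{\pi \in \mathcal{C}_n} q^{\inv(\pi)}$. The proposed raising map sends $\pi \in \mathcal{C}_n$ with $\inv(\pi) = k$ to $\pi s_i$, where $i$ is the leftmost position with $\pi(i) < \pi(i+1)$ for which the swap both preserves the heavy-tailed condition $\sum_{j=1}^{\ell} \pi(j) \le \ell(n+1)/2$ for all $\ell$ and keeps the result canonical within its coset. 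The swap increases $\inv$ by $1$, and injectivity would be established by exhibiting the inverse: the position $i$ can be identified as a specifically-marked descent of $\pi s_i$ via a dual rule. A reverse (lowering) injection would be defined symmetrically above the mode.

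The main obstacle is the global character of the heavy-tailed condition: a swap at position $i$ shifts only the $i$-th prefix sum, yet one must verify that every one of the $n$ prefix-sum inequalities still holds and that the result remains the canonical heavy-tailed representative of its coset. Guaranteeing the existence of a legal ascent for every $\pi \in \mathcal{C}_n$ whose level is strictly below the mode will likely require showing that the number of valid swap positions grows with $\inv(\pi)$ up to that point, probably via a delicate counting argument or by tracking a specific descent-type structure; the subtlety is compounded by cosets with multiple heavy-tailed representatives, which must be handled coherently by the canonical rule. Two alternative routes, to be pursued if the direct injection proves stubborn, are: (i) realize $F_n(q)$ as the Hilbert series of a natural graded representation of $S_n$ acting on (a quotient related to) $S_n/\ZZ_n$, deriving unimodality from representation-theoretic positivity; or (ii) construct a bijection between $\mathcal{C}_n$ and a combinatorial class — for instance, certain necklaces, parking functions, or standard Young tableaux — whose natural inversion-type statistic is already known to be unimodal. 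Computer data up to $n = 12$, of the sort already used in the paper to check~\eqref{eq:conj0}, should help locate the mode and calibrate the correct canonical rule.
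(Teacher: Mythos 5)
This statement is a \emph{conjecture} in the paper, not a theorem: the authors offer no proof, and it remains open. So there is no paper argument to compare against, and the question is only whether your proposal actually establishes the claim. It does not: what you have written is a strategy sketch, not a proof. You propose an order-raising injection but never define it unambiguously (``the leftmost position $i$ \ldots\ for which the swap both preserves the heavy-tailed condition \ldots\ and keeps the result canonical''), never prove that such a position $i$ exists for every $\pi\in\mathcal{C}_n$ with $\inv(\pi)$ below the mode, and never exhibit the inverse map you invoke for injectivity. You yourself flag the main obstacle --- the global nature of the prefix-sum constraints and the coherence issue for cosets with several heavy-tailed representatives --- and then defer it to ``a delicate counting argument'' that is not given. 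The two fallback routes (a graded $S_n$-representation realizing $F_n(q)$ as a Hilbert series, or a bijection to a class with a known unimodal inversion statistic) are likewise named but not pursued, and the stronger log-concavity claim $a_k^2\ge a_{k-1}a_{k+1}$ is asserted as a ``target'' with no evidence.

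Concretely, the gap is this: a unimodality proof by injection requires (i) a fully specified map $\phi_k$ from level $k$ to level $k+1$ for each $k$ below the mode, (ii) a proof that $\phi_k$ is total (every coset at level $k$ has an image), and (iii) a proof that $\phi_k$ is injective. None of (i)--(iii) is carried out. Moreover, $\Gamma_n$ is not vertex-transitive (as the paper notes for $n\ge 4$), and the level sets of $\minv$ on $S_n/\ZZ_n$ have no obvious poset or lattice structure that would make the usual Stanley-style machinery apply directly; your canonical-representative set $\mathcal{C}_n$ is defined by a tie-breaking rule whose interaction with adjacent transpositions is not analyzed. Without resolving at least one of these points, the proposal does not advance beyond the conjecture as stated in the paper.
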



\smallskip

The $\winv$ statistic, from Definition~\ref{def:winv}, 
plays a crucial role in the proof 
of the upper bound on $\Sort_n$. 
Lemma~\ref{lem:u6} implies the following geometric interpretation of $\winv$:   
\[
    \frac{\winv(\pi)}{\sum_{i=1}^n i^2} 
    = 1 - \cos \alpha_\pi\,, 
\]
where $\alpha_\pi$ is the angle between the vectors $v_\pi :=(\pi(1),\dots,\pi(n))$ and $v_{id} :=(1,2,\dots,n)$ in $\RR^n$. 
The distribution of $\winv$ is mysterious. 
Note that, by Observation~\ref{obs:u2}, the $\winv$ generating function is palindromic. 

\medskip 

The set of adjacent transpositions $\Phi_n := \{(i,i+1):\ 1\le i<n\}$ may be augmented by the transposition $(n,1)$, to yield a cyclically symmetric set:
\[
    \Phi^{cyc}_n
    := \{(i,i+1 \bmod n) :\, 1\le i\le n\}
    = \Phi_n \sqcup \{(n,1)\}.
\]
It is natural to sort cyclic permutations of size $n$ by conjugation with this cyclically symmetric set. 
Let $\Gamma^{cyc}_n$ be the undirected graph whose vertex set is the class of $n$-cycles $C_n$ in $S_n$, and two cycles $c_1,c_2 \in C_n$ are adjacent if there exists a transposition $s \in \Phi^{cyc}_n$, such that $c_2=s c_1 s$.
\begin{problem}
    Find the diameter 
    of $\Gamma^{cyc}_n$.
\end{problem}

Finding the shortest path  in $\Gamma^{cyc}_n$ from a given cyclic permutation to the canonical cyclic permutation is equivalent to sorting cyclic permutations by switching cyclically adjacent values, and is widely open.   
The analogous sorting process of cyclic permutations by switching cyclically adjacent \emph{positions} 
was recently studied in~\cite{AAR}. 

\bigskip

\noindent
{\bf Acknowledgments.} 
We thank Noga Alon for useful discussions and suggestions.

\end{document}